\renewcommand{\bigcirc}{\bigodot}
\renewcommand{\circ}{\odot}
\def\N{{\mathbb N}}
\def\m{\medskip}
\def\Z{{\mathbb  Z}}
\newtheorem{thm}{\bf Theorem}[section]
\newtheorem{lemma}[thm]{\bf Lemma}
\newtheorem{prop}[thm]{\bf Proposition}
\newtheorem{cor}[thm]{\bf Corollary}
\theoremstyle{definition}
\newtheorem*{defi}{\bf Definition}
\newtheorem{examp}[thm]{Example}
\title[Iterated linear operations]{Iterated compositions of linear operations\\ 
on sets of positive upper density}
\author[N. Hegyv\'ari]{Norbert Hegyv\'ari}
\address{Norbert Hegyv\'{a}ri, ELTE TTK,
E\"otv\"os University, Institute of Mathematics,
H-1117 P\'{a}zm\'{a}ny st. 1/c,
Budapest, Hungary}
\email{hegyvari@elte.hu}
\author[F. Hennecart]{Fran\c cois Hennecart}
\address{Fran\c cois Hennecart,
LaMUSE,
Univ. Jean-Monnet,
23 rue Michelon,
42023 Saint-Etienne cedex 2,
France}
\email{francois.hennecart@univ-st-etienne.fr}
\thanks{Research of the first and second named authors
are supported by ``Balaton Program Project" and OTKA grants TO
43623,49693,38396. }
\thanks{2000 Mathematics Subject Classification: 11B05}
\thanks{Key-words: linear operation, stability, periodicity, structure, 
upper density, difference set, gaps}
\author[A. Plagne]{Alain Plagne}
\address{Alain Plagne,
Centre de Math\'ematiques Laurent Schwartz,
UMR 7640 du CNRS,
\'Ecole polytechnique,
91128 Palaiseau Cedex,
France}
\email{plagne@math.polytechnique.fr}
\date{\today}
\begin{document}

\begin{abstract}
Starting from a result of Stewart, Tijdeman and Ruzsa 
on iterated difference sequences, we introduce the notion 
of iterated compositions of linear operations. We prove 
a general result on the stability of such compositions (with bounded 
coefficients) on sets of integers having a positive upper density.
\end{abstract}

\maketitle

\section{\bf Introduction}

Let $G$ be an additive Abelian group considered as a $\Z$-module. 
A {\em linear operation} $\Gamma$ is a mapping 
$$
X\mapsto aX+bX:= \{ax+bx'\mid x,x'\in X\}\hspace{1cm}  (X \subset G)
$$
from the set of all subsets of $G$ on itself,
where $a,b\in \Z$ are fixed integers.
We also introduce the concept of {\em iterated linear operation} in the following way:
a linear operation $\Gamma$ being given, we put  $\Gamma_{1}=\Gamma$, and
for $k\ge2$, $\Gamma_k(X)=\Gamma(\Gamma_{k-1}(X))$ for any $X\subset G$.
An important example of linear operation is given by  the difference operation defined by
$\Gamma(X)=X-X$, ($X\subset G$). 

In the case where $G$ is the set of integers, Stewart and Tijdeman in
\cite{r4} investigated the so-called iterated {\em positive} difference 
operation: for an infinite set  $A$ of positive integers,
let $D^+(A)$ be the positive difference set
defined by 
$$
D^+(A)=\{a-a'\mid a\geq a',\, a,a'\in A\}.
$$ 
The sequence of iterated positive difference sets 
$\{D^+_k(A);\;k\ge0\}$ of $A$ is defined by 
$D^+_{0}(A)=A$ and $D^+_k(A)=D^+(D^+_{k-1}(A))$ for $k\ge1$.
Stewart and Tijdeman observed
that if a sequence $A$ has positive {\em upper density} i.e. if
$$
\overline{d}(A):= \limsup_{n \to \infty}\frac{| A\cap\{1,2,\dots,n\} |}{n}>0,
$$
then the sequence $\{D^+_k(A);\; k\ge0\}$ is stable, that is there exists 
an integer $k_0$ such that, $D^+_{k+1}(A)=D^+_k(A)$ for every $k\geq k_0$.
The time of stability of $A$ is defined by $T(A)=\min \{k\mid
D^+_{k+1}(A)=D^+_k(A)\}$.
For instance, if $\overline d (A)>1/2$, it is readily seen that
$D^+(A)$ is the whole set of nonnegative integers, hence
$T(A)\le1$. In \cite{r4} Stewart and Tijdeman gave an upper bound for
$T(A)$ if the upper density of $A$ is positive. They proved that if
$0<\overline d (A)\leq 1/2$ then $T(A)\leq
2\log_2 (\overline{d}(A)^{-1})$, where $\log_{2}$ denotes the logarithmic function in
base $2$. This result was improved by Ruzsa in \cite{r8} where it is shown
that under the same assumption on $\overline{d}(A)$, we have
$T(A)\leq 2+\log_2(\overline {d}(A)^{-1}-1)$.

Instead of a restricted difference operation, we may also investigate the related question
of the stability of the sequence $\{D_{k}(A);\;k\ge0\}$, with $D_{0}(A)=A$, 
$D_{1}(A)=A-A$ and $D_{k}(A)=D(D_{k-1}(A))$, for $k\ge1$.  
The advantage of this question is that it
can be handled  in more general groups, as shown in
\cite{r5} and \cite{r9}. As a direct consequence of Stewart-Tijdeman's or Ruzsa's results,
we infer the stability of $\{D_{k}(A);\;k\ge0\}$ whenever
$A$ has a positive upper density in the set of positive integers.

For $n\in \N$ and $X$ a subset of some (additively written) group $G$, 
we shall use the following (slightly non standard) notation
$$
nX=\{nx \mid  x\in X\},\quad Xn=\underbrace
{X+X+\cdots+X}_{n\text{ times}}.
$$
It is easy to see that for $n,m\in \N$ we have $(mX)n=m(Xn)$ so we
briefly write  $mXn$. Furthermore for $n,m,k\in \N$
$$
nXk+nXm=nX(k+m),\quad nXk+mXk\supseteq (n+m)Xk.
$$

For a real number $z$, we shall also use the notation
$$
||z||=\min_{m \in \Z} |m-z|\quad 
$$
and $\lfloor z \rfloor$ (resp. $\lceil z \rceil$) for the
integral part of $z$ by default (resp. by excess). Finally let 
$\{z\}=z-\lfloor z \rfloor$ be the fractional part of $z$.

In this paper, we restrict our attention to iterated linear
operations in the set of integers without any other restriction.

\section{\bf A preliminary discussion and plan of the paper}

Let $a$ and $b$ be given integers and $\Gamma$ be the linear operation
defined on subsets $X \subset \Z$ by 
$$
\Gamma(X)=aX+bX=\{ax+bx'\mid x,x'\in X\}.
$$
As before, we set $\Gamma_{0}(X)=X$ and
$\Gamma_k(X)= \Gamma(\Gamma_{k-1}(X))$, for $k\ge1$. 
The central question in this paper is: What can be said 
on the stability of the sequence $\{\Gamma_k(X);\;k\ge1\}$ 
if we assume further that $X$ has a positive upper density ?

The case $(a,b)=(1,-1)$ leads to the usual difference set 
and the stability ensues from Stewart-Tijdeman's and Ruzsa's results 
on the iterated positive difference operation. 

If $a b>0$, then  the absolute value of the minimal element of $\Gamma_k(X)$ 
tends to infinity as $k$ tends to infinity and consequently the sequence 
$\{\Gamma_k(X);\;k\ge1\}$ cannot be stable. 
Therefore, without loss of generality, we may
assume that $\Gamma(X)=aX-bX$ with $ab>0$.
Since for every integer
$\alpha$  we have $\Gamma(\alpha X)=\alpha\Gamma(X)$, we get
$\Gamma_k(-X)=-\Gamma_k(X)$,  for any $k\in\mathbb{N}$, which implies 
that $\Gamma_k(-X)$ and  $-\Gamma_k(X)$ have the same structure. It is
thus enough to consider the case $a>b>0$.

In the case $a=b+1$, it is not hard to show that  for any
arithmetic progression $X$, the sequence $\{\Gamma_k(X);\;k\ge1\}$ is stable.

We now consider the case when $a> b+1$. In this case, we
show that there exists an arithmetic progression $X$ for which
the sequence $\{\Gamma_k(X);\;k\ge1\}$ is not stable. 
For this, we distinguish two cases:
\medskip

Case 1: $\gcd(a,b)=d>1.$\

Let $a'=a/d$ and $b'=b/d$. Then $\Gamma_1(X)=\Gamma(X)=d(a'X-b'X)=dX_1$, 
with $X_{1}\subset\mathbb{Z}$, and by induction we have
that  $\Gamma_k(X)=d^k X_k$ for every $k\in \N$, for some
set $X_k\subset \Z$. In this case, we can choose $X$ to be $\N$. Now
if the sequence $\{\Gamma_k(X);\;k\ge1\}$ were stable then
there would be an interval $(-\alpha, \alpha)$ for which
$(-\alpha, \alpha)\cap \Gamma_k(X)\neq \varnothing$ for every
large $k$. This is a contradiction to $\Gamma_k(X)=d^k X_k$.
\smallskip

Case 2: $\gcd(a,b)=1.$\

Let $X=\{ab m+1: m\in \N\}$. We claim that
$$
\Gamma_k(X)=\{abm+(a-b)^k\mid m\in \Z\},\qquad k\in\mathbb{N}.
$$
Indeed, we first note that  
$$
\Gamma_1(X)=\{a(abm+1)-b(abn+1) \mid m,n\in\N\}
=\{ab(am-bn)+a-b \mid m,n\in \N\},
$$ 
and since $\gcd(a,b)=1$, we obtain $\Gamma_1(X)=\{abm+a-b \mid m\in \Z\}$. 
We  get by induction $\Gamma_k(X)=\{abm+(a-b)^k \mid m\in \Z\}$, $k\in\N$.

If $\Gamma_{k+1}(X)=\Gamma_{k}(X)$ for some $k$, we infer 
$(a-b)^{k+1}\equiv (a-b)^k \pmod{ab}.$ But since $\gcd(a-b,ab)=1$, 
this implies that $a-b\equiv 1 \pmod{ab}$ and since $a,b+1\leq
ab$ we obtain $a=b+1,$ a contradiction.  Thus the sequence
$\{\Gamma_k(X);\; k\ge1\}$  cannot be stable.

\renewcommand{\theenumi}{\roman{enumi}}
\medskip

In the above construction -- when $\gcd(a,b)=1$ and
$a\neq b+1$ -- the sequence $\{\Gamma_k(X);\;k\ge1\}$ is not stable, but
it has a regularity property: $\{\Gamma_k(X);\;k\ge1\}$ is eventually
periodically stable in the sense that there exists a positive integer 
$p$ such that $\Gamma_{k+p}(X)=\Gamma_{k}(X)$ for any integer $k$.

According to this observation, we will extend
the notion of stability in the more general context of composition 
of linear operations described in Section \ref{S2}. 
We will investigate the stability of sequences defined by iterating 
a priori distinct linear operations $X\mapsto a_{k}X-b_{k}X$ ($k\ge1$), 
on a set $X$ of integers.

In Section \ref{S3}, several useful results in our context, on density and gaps
will be presented, while in Section \ref{S4} an inverse result (Proposition 
\ref{proposition5.3}) for linear operations on a set of residues classes 
modulo some integer will be stated and proved.

Having all this material at hand, we will be able in Section \ref{S5} 
to state our main result (Theorem \ref{theorem6.1}). This result in particular 
implies that if one iterates linear operations with bounded coefficients on a 
set of integers with positive upper density, then the resulting set of integers 
will be fully periodic from some time on. Moreover the sequence of iterates 
will be stable. In the special case of iterating a unique linear operation, 
then the sequence of iterates will be not only stable but itself periodic 
(see Remark (1) in the final section).

\section{\bf Composition of linear operations and stability}
\label{S2}

Instead of iterating  a unique linear operation $\Gamma$ as discussed up to now, 
we consider a composition of different linear operations in the following way:

\m
For $(a_{1},b_{1})$ and $(a_{2},b_{2})$ two couples of positive integers,
let $\Gamma_{a_{j},b_{j}}$, $(j=1,2)$, be defined by
$\Gamma_{a_j,b_j}(X)=a_j X - b_j X$, ($X\subset \mathbb{Z}$).
The  composition
of $\Gamma_{a_1,b_1}$ and $\Gamma_{a_2,b_2}$,  denoted by
$\Gamma_{a_1,b_1}\circ \Gamma_{a_2,b_2}$ is the linear operation defined on
each set $X \subset \Z$ by
$$
\Gamma_{a_1,b_1}\circ \Gamma_{a_2,b_2}(X)=
\Gamma_{a_2,b_2}(\Gamma_{a_1,b_1}(X))=
a_{1}a_{2}X+b_{1}b_{2}X-a_{1}b_{2}X-a_{2}b_{1}X.
$$
More generally, let $(a_1,b_1),(a_2,b_2),
\dots (a_{s},b_{s})$ be a finite sequence of couples of positive integers and define
the composition of the $\Gamma_{a_{j},b_{j}}$, ($j=1,\dots,s$) in a natural way by
$$
\bigcirc_{j=1}^s \Gamma_{a_j,b_j}(X)=\Gamma_{a_1,b_1}\circ
\Gamma_{a_2,b_2}\circ \cdots
\circ\Gamma_{a_s,b_s}(X).
$$
For $s=0$, this convoluted set is defined to be $X$.

We now give an important definition for our purpose.

\begin{defi}
{\em
Let $t$ be a positive integer and $(a_{j},b_{j})_{j\in\mathbb{N}}$, 
be a sequence of couples of positive integers.
We say that a subset $X\subset \N$ is $t$-stable
with respect to the sequence of linear operations 
$(\Gamma_{a_j,b_j})_{j\in\N }$ if  the set
$\{X\}\cup\{\bigcirc_{j=1}^{k} \Gamma_{a_j,b_j}(X) \mid k\in\N\}$
has a cardinality less than or equal to $t$.
}
\end{defi}

We expect that a $t$-stable sequence has a ``big'' upper
density. An integer $s$ being given, we write $[1,s]$ 
for the set $\{1,2,\dots,s\}$. The notation $I\sqcup J=[1,s]$ 
means that $I\cup J=[1,s]$ and $I\cap J=\varnothing$. 

Let us first prove the following result.

\begin{thm}
\label{theorem3.1}
Let $t$ be any positive integer and 
$(a_1,b_1),(a_2,b_2), \dots, (a_{t},b_{t})$
be $t$ couples of positive integers. 
Then there exists a set $A\subset \N$ with asymptotic density
$$
d(A):= \lim_{n \to \infty}\frac{| A\cap\{1,2,\dots,n\}|}{n} = 
\prod_{i=1}^t\frac1{a_{i}+b_{i}}
$$
such that the finite set 
$\{\bigcirc_{j=1}^{s} \Gamma_{a_j,b_j}(A) \mid 0\le k\le t\}$
has cardinality $t+1$.
\end{thm}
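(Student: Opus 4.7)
Set $D := \prod_{i=1}^t(a_i+b_i)$ and take $A := D\N \cup \{1\}$; since adding a single point does not affect the asymptotic density, $d(A) = 1/D$ as required. Writing $A_s := \bigcirc_{j=1}^s \Gamma_{a_j,b_j}(A)$, the aim is to prove that $A_0, A_1, \ldots, A_t$ are pairwise distinct. Expanding the compositions yields
\[A_s = \Bigl\{\sum_{\epsilon\in\{0,1\}^s} C_\epsilon\, x_\epsilon : x_\epsilon \in A\Bigr\},\quad C_\epsilon = (-1)^{|\epsilon|}\prod_{j=1}^s u_j(\epsilon_j),\]
where $u_j(0)=a_j$ and $u_j(1)=b_j$. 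Since $A\equiv\{0,1\}\pmod D$ with both residues realised (by $D$ and $1$), the reduction $R_s := A_s \bmod D$ coincides exactly with the subset-sum set $\{\sum_{\epsilon \in S} C_\epsilon \bmod D : S \subseteq \{0,1\}^s\}$.

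Put $P_s := \prod_{i\leq s}(a_i+b_i)$ and $Q_s := \prod_{i\leq s}(a_i-b_i)$, so that $\sum_\epsilon |C_\epsilon| = P_s$ and $\sum_\epsilon C_\epsilon = Q_s$. The extremal (unreduced) subset sums are then $M_s = (P_s+Q_s)/2$ and $m_s = (Q_s-P_s)/2$, with $M_s - m_s = P_s$. Assuming without loss of generality that $a_i \geq b_i \geq 1$, an elementary induction on $s$ gives $0 < M_s < D$ for every $s \leq t$, together with strict monotonicity $M_0 < M_1 < \cdots < M_t$ and $m_0 > m_1 > \cdots > m_t$.

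The plan is to use $M_s$ as a witness that $R_s \neq R_j$ for $0 \leq j < s \leq t$. The canonical representatives of $R_j$ in $\{0,1,\ldots,D-1\}$ lie in $[0,M_j] \cup [m_j+D, D-1]$, and the representative of $M_s \bmod D$ is $M_s$ itself, since $M_s\in(0,D)$. Strict monotonicity $M_s > M_j$ excludes the first interval; excluding the second reduces to the inequality $M_s - m_j < D$, i.e.\ $Q_s + P_j - Q_j < D$. Writing $Q_s = Q_j \prod_{j<i\leq s} c_i$ with $c_i := a_i - b_i$, and using $|Q_j| \leq P_j$ together with the strict inequality $\prod_{j<i\leq s} c_i < \prod_{j<i\leq s}(a_i+b_i)$ (valid because each $b_i \geq 1$), a short computation treating separately the cases $\prod c_i = 0$ and $\prod c_i \geq 1$ yields $Q_s + P_j - Q_j < P_s \leq D$. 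Hence $M_s \in R_s \setminus R_j$, so $A_s \neq A_j$, and the set $\{A_0, \ldots, A_t\}$ has cardinality $t+1$.

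The delicate point is the separation inequality $M_s - m_j < D$ in the borderline case $s = t$ (where $P_t = D$): the strictness there depends precisely on $\prod c_i < \prod d_i$, equivalently on the positivity of every $b_i$; without this hypothesis the iterates could collapse modulo $D$ at the last step.
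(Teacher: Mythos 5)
Your construction ($A=D\N\cup\{1\}$ with $D=\prod_{i=1}^t(a_i+b_i)$, and separation of the iterates by their residues modulo $D$) is genuinely different from the paper's, which takes $A=\{a\in\N:\|\alpha a\|<1/(2D)\}$ for an irrational $\alpha$ and distinguishes the iterates by the lengths of the intervals in which the fractional parts $\{\alpha x\}$, $x\in\Gamma_s(A)$, are dense. Your skeleton is sound: the identification of $A_s\bmod D$ with the set of subset sums of the coefficients $C_\epsilon$ is correct, the formulas $M_s=(P_s+Q_s)/2$, $m_s=(Q_s-P_s)/2$ are right, and the separation inequality $M_s-m_j<D$ does hold, including at $s=t$. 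But two intermediate claims are false as stated, and they leave a real hole.

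First, the strict monotonicity $M_0<M_1<\cdots<M_t$ fails: $M_1=(P_1+Q_1)/2=a_1$, so $M_1=M_0=1$ whenever $a_1=1$ --- in particular for the plain difference operation $(a_1,b_1)=(1,1)$. Then $M_1\in R_0=\{0,1\}$ and your witness says nothing about the pair $(s,j)=(1,0)$. (One checks this is the only pair where monotonicity fails.) For $t\ge2$ this is repairable by switching to the minimal sum, since $m_1+D=D-b_1\in R_1\setminus R_0$; but for $t=1$ and $(a_1,b_1)=(1,1)$ one has $D=2$ and $R_0=R_1=\Z/2\Z$, so no residue modulo $D$ separates $A_0$ from $A_1$ and a supplementary observation is needed (e.g.\ $A_1=A-A$ contains negative integers while $A_0\subset\N$). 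Second, ``without loss of generality $a_i\ge b_i$'' is not justified: swapping $(a_i,b_i)$ replaces $\Gamma_{a_i,b_i}(X)$ by $-\Gamma_{a_i,b_i}(X)$, so the iterates of the normalized system are $\pm A_s$ with signs depending on $s$, and pairwise distinctness of the $A_s$ does not follow from that of the $\pm A_s$ unless you prove the stronger statement $A_s\ne\pm A_j$ --- which your witness does not give (for $t=1$, $b_1=1$ one has $M_1+M_0=D$, so $M_1\equiv -M_0\pmod D$ and $M_1\in -R_0$). The cleaner repair is to drop the normalization and verify $Q_j(c-1)<P_j(d-1)$ directly for possibly negative $Q_j$ and $c=\prod_{j<i\le s}(a_i-b_i)$ (it still holds, using $|Q_j|\le P_j-2$ when $Q_j\ne P_j$), keeping a separate treatment of the pair $(1,0)$ when $a_1=1$. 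With these two points fixed the argument goes through and gives a pleasantly explicit, purely arithmetic alternative to the paper's metrical construction.
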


As an immediate consequence, we have the following corollary. 

\begin{cor}
\label{corollary3.2}
Let $L\ge1$ be an integer and $(a_j,b_j)_{j\in\mathbb{N}}$, 
be a sequence of positive integers such that $|a_j|,|b_j|\le L$ for any $j\ge1$.
Then for any positive integer $t$, there exists a set $A\subset \mathbb{N}$ 
with  asymptotic density $d(A)\ge (2L)^{-t}$ such that $A$ is not 
$t$-stable with respect to $(\Gamma_{a_j,b_j})_{j\in\N }$.
\end{cor}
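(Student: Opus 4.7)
The plan is to deduce the corollary directly from Theorem \ref{theorem3.1}. I would apply Theorem \ref{theorem3.1} to the initial segment $(a_1,b_1),\dots,(a_t,b_t)$ of the sequence $(a_j,b_j)_{j\in\mathbb{N}}$. This produces a set $A\subset\mathbb{N}$ with asymptotic density $d(A)=\prod_{i=1}^{t}(a_i+b_i)^{-1}$ and with the property that the $t+1$ sets $A,\Gamma_{a_1,b_1}(A),\dots,\bigcirc_{j=1}^{t}\Gamma_{a_j,b_j}(A)$ are pairwise distinct.

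Since the $a_j$ and $b_j$ are positive integers bounded in absolute value by $L$, each factor satisfies $a_i+b_i\le 2L$, and hence $d(A)\ge(2L)^{-t}$, which is the claimed density bound. For the failure of $t$-stability, the orbit set $\{A\}\cup\{\bigcirc_{j=1}^{k}\Gamma_{a_j,b_j}(A)\mid k\in\mathbb{N}\}$ contains the $t+1$ pairwise distinct sets produced by Theorem \ref{theorem3.1} (with $A$ playing the role of the $k=0$ term under the convention that the empty composition is the identity). Its cardinality is therefore at least $t+1>t$, so by the very definition of $t$-stability, $A$ is not $t$-stable with respect to $(\Gamma_{a_j,b_j})_{j\in\mathbb{N}}$.

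No genuine obstacle arises at this level: the corollary is a direct rewording of Theorem \ref{theorem3.1} in which the exact density $\prod(a_i+b_i)^{-1}$ is replaced by the uniform lower bound permitted by $|a_j|,|b_j|\le L$. All the substance of the statement lies in the proof of Theorem \ref{theorem3.1} itself, where one actually has to exhibit a witness $A$ of the prescribed density whose first $t+1$ iterates are forced to differ. A natural approach there would be to take $A$ to consist of those nonnegative integers whose residue modulo $M=KP$ lies in $\{0,1,\dots,K-1\}$, with $P=\prod_{i=1}^{t}(a_i+b_i)$ and $K$ a large integer parameter, and then to verify by induction on $k$ that $\bigcirc_{j=1}^{k}\Gamma_{a_j,b_j}(A)$ coincides, up to finitely many boundary exceptions, with a union of an ``interval'' of residue classes modulo $MD_k$, where $D_k=\prod_{j\le k}\gcd(a_j,b_j)$; successive iterates would then be separable either by the supporting lattice $D_k\mathbb{Z}$ or by the cardinality of the residue pattern, which grows by a factor $(a_k+b_k)/\gcd(a_k,b_k)\ge2$ at each step. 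The main technical point in this argument, and where I would expect the difficulty to concentrate, is the Sylvester--Frobenius-type identification of $aI-bI$ (for $I$ a long interval of integers) as $\gcd(a,b)$ times a slightly shorter interval.
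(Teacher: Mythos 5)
Your deduction is correct and is exactly how the paper obtains the corollary: it is stated there as an immediate consequence of Theorem \ref{theorem3.1}, via $a_i+b_i\le 2L$ for the density bound and the $t+1$ pairwise distinct iterates forcing the orbit to have cardinality $>t$. (Your closing speculation about how Theorem \ref{theorem3.1} itself is proved differs from the paper, which uses $A=\{a:\|\alpha a\|<\delta/2\}$ for irrational $\alpha$ rather than residue classes, but that is immaterial to the corollary.)
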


This result shows that if one demands to a set $A$ to be $t$-stable 
then $t$ has to be large enough (with respect to the density of $A$).
It is to be seen as a kind of limit (or a counterpart) to our 
main forthcoming result, namely Theorem \ref{theorem6.1}.

Before giving the very proof of Theorem \ref{theorem3.1}, we start 
with two lemmata. First, the following lemma 
can be obtained by a straightforward induction.

\begin{lemma}
\label{lemma3.3}
We have
$$
\bigcirc_{j=1}^s \Gamma_{a_j,b_j}(X)=
\sum _{I\sqcup J=[1,s]}(-1)^{|J|}\big (\prod_{i\in I}a_i\cdot \prod_{j\in J}b_j\big )X.
$$
\end{lemma}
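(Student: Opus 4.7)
The plan is induction on $s$. The base case $s=1$ is immediate: only two pairs satisfy $I\sqcup J=\{1\}$, namely $(\{1\},\varnothing)$ and $(\varnothing,\{1\})$, which contribute $a_1X$ and $-b_1X$ respectively, so the right-hand side reduces to $a_1X-b_1X=\Gamma_{a_1,b_1}(X)$ by definition.

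For the inductive step, I assume the formula at step $s$ and set $Y:=\bigcirc_{j=1}^{s}\Gamma_{a_j,b_j}(X)$, so that by the very definition of iterated composition
$$
\bigcirc_{j=1}^{s+1}\Gamma_{a_j,b_j}(X)=\Gamma_{a_{s+1},b_{s+1}}(Y)=a_{s+1}Y-b_{s+1}Y.
$$
I then split the $2^{s+1}$ partitions $I\sqcup J=[1,s+1]$ according to whether $s+1\in I$ or $s+1\in J$. In the first case, writing $I=I'\cup\{s+1\}$ with $I'\sqcup J=[1,s]$, the coefficient $(-1)^{|J|}\prod_{i\in I}a_i\prod_{j\in J}b_j$ equals $a_{s+1}\cdot(-1)^{|J|}\prod_{i\in I'}a_i\prod_{j\in J}b_j$; in the second case, writing $J=J'\cup\{s+1\}$ with $I\sqcup J'=[1,s]$, it equals $-b_{s+1}\cdot(-1)^{|J'|}\prod_{i\in I}a_i\prod_{j\in J'}b_j$.

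To conclude I factor $a_{s+1}$ (resp.\ $-b_{s+1}$) out of each half-sumset. The key (and only) algebraic fact needed is that scalar dilation distributes over the Minkowski sum, $c(Z_1+Z_2)=cZ_1+cZ_2$ for $c\in\Z$ and $Z_1,Z_2\subset\Z$, which follows at once from $c(z_1+z_2)=cz_1+cz_2$. After factoring, each half-sumset is a scalar multiple of the right-hand side of the lemma at step $s$, hence equals $a_{s+1}Y$ or $-b_{s+1}Y$ by the induction hypothesis. Their Minkowski sum is $a_{s+1}Y+(-b_{s+1})Y=a_{s+1}Y-b_{s+1}Y$, as required.

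There is essentially no obstacle here beyond bookkeeping, which fits the author's description ``straightforward induction''. The one pitfall I would be careful to avoid is conflating the valid identity $c(Z_1+Z_2)=cZ_1+cZ_2$ (which I do use) with the generally false ``distributivity'' $(c_1+c_2)X=c_1X+c_2X$ (which I do not, and must not, use), the failure of the latter being precisely what makes a sumset $X+X$ strictly larger than $2X$ in general.
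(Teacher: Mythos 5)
Your proof is correct and is exactly the ``straightforward induction'' the paper alludes to (the paper gives no written proof of this lemma): split the partitions of $[1,s+1]$ according to the location of $s+1$, use the identity $c(Z_1+Z_2)=cZ_1+cZ_2$ for dilates of Minkowski sums, and invoke the induction hypothesis. Your remark distinguishing this valid identity from the false $(c_1+c_2)X=c_1X+c_2X$ is exactly the right point of care, and the ordering $\bigcirc_{j=1}^{s+1}\Gamma_{a_j,b_j}(X)=\Gamma_{a_{s+1},b_{s+1}}\bigl(\bigcirc_{j=1}^{s}\Gamma_{a_j,b_j}(X)\bigr)$ matches the paper's convention.
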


Note that this lemma implies that composition of linear operations 
is commutative, and in particular
$
\Gamma_{a_1,b_1} \circ \Gamma_{a_2,b_2}(X)
=\Gamma_{a_2,b_2} \circ \Gamma_{a_1,b_1}(X).
$

We shall also need the following immediate metrical lemma.

\begin{lemma}
\label{lemma3.4}
Let $X$ and $Y$ be two sets of positive integers and $\alpha$ be a real number.
If the sequences $(\{ \alpha x \})_{x\in X}$ and  $(\{ \alpha y \})_{ y\in Y}$
are dense in $(1-\beta,1)\cup(0,\beta)$ and $(1-\gamma,1)\cup(0,\gamma)$ respectively 
then the sequence  $(\{ \alpha z \})_{z\in X+Y}$ is dense in
$(1-\mu,1)\cup(0,\mu)$ where $\mu=\min(\beta+\gamma,\frac12)$.

Moreover for any integer $a$, the sequence $(\{ \alpha ax \})_{x\in X}$
is dense in $(1-\lambda,1)\cup(0,\lambda)$ where 
$\lambda=\min(|a|\beta,\frac12)$.
\end{lemma}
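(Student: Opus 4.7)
The plan is to recast the hypothesis in a ``signed'' form modulo~$1$ and then to propagate it through the group operations $(x,y)\mapsto x+y$ and $x\mapsto ax$. Observe that the assertion ``$(\{\alpha x\})_{x\in X}$ is dense in $(1-\beta,1)\cup(0,\beta)$'' is equivalent to the statement: for every real $\tau$ with $0<||\tau||<\beta$ and every $\varepsilon>0$ there exists $x\in X$ with $||\alpha x-\tau||<\varepsilon$; likewise for $Y$ with $\gamma$. Identifying each point $t$ of $(1-\mu,1)\cup(0,\mu)$ with its signed representative $\tau\in(-\mu,\mu)\setminus\{0\}$ (namely $\tau=t$ if $t<\mu$ and $\tau=t-1$ if $t>1-\mu$), the two claims become the transport of signed targets through sum and scalar multiplication modulo~$1$.

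For the first (sum) part, fix a target with signed representative $\tau\in(-\mu,\mu)\setminus\{0\}$. Since $|\tau|<\mu\le\beta+\gamma$, one can split $\tau=\tau_1+\tau_2$ with $|\tau_1|<\beta$ and $|\tau_2|<\gamma$ (for instance by distributing $|\tau|$ proportionally between $\beta$ and $\gamma$). Applying the signed reformulation to $X$ at target $\tau_1$ and to $Y$ at target $\tau_2$ produces $x\in X$ and $y\in Y$ with $||\alpha x-\tau_1||$ and $||\alpha y-\tau_2||$ arbitrarily small. The triangle inequality for $||\cdot||$ then gives $||\alpha(x+y)-\tau||\le||\alpha x-\tau_1||+||\alpha y-\tau_2||$ arbitrarily small, and taking fractional parts yields $\{\alpha(x+y)\}$ arbitrarily close (in the Euclidean sense) to the original target.

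For the second (scalar multiplication) part, we may assume $a\ne0$ since otherwise $\lambda=0$ and the claim is vacuous. Fix a target with signed representative $\tau\in(-\lambda,\lambda)\setminus\{0\}$ and set $\sigma=\tau/a$, so that $|\sigma|=|\tau|/|a|<\lambda/|a|\le\beta$. Applying the signed reformulation to $X$ at target $\sigma$ produces $x\in X$ with $||\alpha x-\sigma||$ arbitrarily small. Hence $||\alpha ax-\tau||=||a(\alpha x-\sigma)||\le|a|\cdot||\alpha x-\sigma||$ is arbitrarily small, and the conclusion again follows by passing to fractional parts.

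The only technical point is the mod-$1$ bookkeeping of fractional parts under sums and integer multiples; the signed-representative viewpoint reduces it to the triangle inequality and the scaling bound $||au||\le|a|\cdot||u||$. The caps $\min(\cdot,1/2)$ on $\mu$ and $\lambda$ ensure that the signed representatives lie in an open interval strictly contained in $(-1/2,1/2)$, so no wrap-around issue arises near~$1/2$ and the Euclidean and mod-$1$ distances agree on neighborhoods of the target. There is no substantive obstacle beyond these verifications.
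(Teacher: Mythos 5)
Your proof is correct. The paper states Lemma \ref{lemma3.4} without proof (calling it ``immediate''), and your argument --- passing to signed representatives modulo $1$, splitting the target $\tau=\tau_1+\tau_2$ with $|\tau_1|<\beta$, $|\tau_2|<\gamma$, and invoking the triangle inequality $\|u+v\|\le\|u\|+\|v\|$ together with $\|au\|\le|a|\,\|u\|$ --- is exactly the standard verification the authors evidently had in mind, with the wrap-around issues handled properly by the caps at $\tfrac12$.
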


We are now prepared for the proof of Theorem \ref{theorem3.1}.

\begin{proof}[Proof of Theorem \ref{theorem3.1}]
In this proof, we shall write $\delta:=1/\prod_{i=1}^t (a_{i}+b_{i})$. 
We define  $A$ as follows: let $\alpha$ be a positive irrational real number and let
$$
A=\left\{a\in \N; \| \alpha a\| <\frac{\delta}2\right\}.
$$
Clearly $d(A) = \delta$. Let $s\le t$ and $x\in\bigcirc_{j=1}^s \Gamma_{a_j,b_j}(A)$. 
By Lemma \ref{lemma3.3} we can write
$$
x= \sum _{I\sqcup J=[1,s]}(-1)^{|J|}
\left (\prod_{i\in I} a_i \cdot \prod_{j\in J}b_j\right) u_{J}
$$
with $u_{J}\in A$, $J\subset[1,s]$. We first observe that
\begin{align*}
\|\alpha x\|&= \left\|\sum _{I\sqcup J=[1,s]}
(-1)^{|J|}\left (\prod_{i\in I}a_i\cdot \prod_{j\in J}b_j\right)
\alpha u_{J}\right\|\\
&\leq
\sum _{I\sqcup J=[1,s]}
\left (\prod_{i\in I}a_i\cdot \prod_{j\in J}b_j\right)
\|\alpha u_{J}\|\\
&<
\sum _{I\sqcup J=[1,s]}
\left (\prod_{i\in I}a_i\cdot \prod_{j\in J}b_j\right) \frac{\delta}2\\
&=\frac{\delta}2\prod_{i=1}^s(a_{i}+b_{i})=
\frac12\prod_{i=s+1}^t\frac{1}{a_{i}+b_{i}}.
\end{align*}
More precisely, since $(\{\alpha a\} )_{a\in A}$ is a dense subset 
of $(1-\delta/2,1)\cup(0,\delta/2)$, 
by Lemma \ref{lemma3.4} and by arguing inductively  we infer that,
for any $s\le t$, the set
$$
\{\|\alpha x\|: x\in \bigcirc_{j=1}^s \Gamma_{a_j,b_j}(A)\}
$$
is a dense subset of  $(0,\frac12\prod_{i=s+1}^t(a_{i}+b_{i})^{-1})$.
Thus clearly all the sets $\bigcirc_{j=1}^s \Gamma_{a_j,b_j}(A)$, $0\le s\le t$, are
mutually distinct.
It follows that $A$ is not $t$-stable.
\end{proof}

An efficient tool that can be used for
yielding the stability of iterated difference sets is
Kneser's theorem (cf. Lemma \ref{lemma4.4}) which describes for $h$ large enough
the structure of any $h$-fold sumset of a sequence of integers having 
a positive lower density. Indeed, if $X$ is assumed to have a positive upper density,
then the first difference set $X-X$ of $X$ is in fact well distributed,
in the sense that it has a positive lower density, namely 
$$
\underline{d}(X-X):= \liminf_{n \to \infty}\frac{| ( X-X)\cap\{1,2,\dots,n\}|}{n} >0
$$
since its {\em gaps} are bounded. Recall that the gaps of an increasing 
sequence $(u_n)$ is the sequence $(u_{n+1}-u_n)$. 
A short proof of this fact is as follows: By a finite recursive construction, 
we first find a maximal set of integers $T=\{ t_1,t_2,\dots,t_s \}$ such that 
the translated sets $X+t_i$ of $X$ are pairwise disjoint ($s$ is finite and 
more precisely must be bounded from above by $1/\overline{d}(X)$). 
Then any integer $z$ is such that $X+z$ intersects at least one of the $X+t_i$'s 
and therefore can be written as $z=(x-x')+t_i$ for some $1 \leq i \leq s$ and 
$x,x'\in X$. We consequently infer that $T+(X-X)=\Z$. In particular, a gap 
in $X-X$ cannot be larger than  $\max_{1 \leq i \leq s} t_i$.

It is no more the case when $(a,b)\neq(1,1)$ as shown by 
the following example where we give
a set $A$ such that $\overline{d}(A)>0$
and $aA-bA$ has arbitrary large gaps.

\begin{examp}
\label{example3.5} 
Let $(a,b)\neq (1,1)$  and
$$
A=\bigcup_{i=1}^\infty(x_i,x_i(1+\delta))\cap
\N,
$$
where $\{x_{i};\;i\ge1\}$ is any fast increasing sequence of positive real numbers
(for instance $x_{i}=i^i$).
If $\delta<a/b-1$, then $\Gamma_{a,b}(A)$ has arbitrary large gaps 
while $\overline{d}(A) \geq \delta/(1+\delta)$.
\end{examp}

Nevertheless, we shall see in Lemma \ref{lemma4.2} that under an additional
hypothesis implying $a,b$ and $\overline{d}(X)>0$, the set $aX-bX$
has bounded gaps and thus has a positive lower density.

\m
A nice result of Bergelson and Ruzsa \cite{r7} brought to our knowledge
in a personal communication generalizes a
theorem of Bogolyubov; these authors proved that if $(r,s,t)$ is 
a triple of integers with $r+s+t=0,$ and $\overline{d}(X)>0$ 
then the set $rX+sX+tX$
contains a Bohr set, that is a set of integers of the type
$\{n\in\mathbb{N}\,:\, \|\alpha_{i}n\|\le\varepsilon_{i},\;
i=1,2,\dots,r\}$, where $\alpha_{i}$, $1\le i\le r$, are
given real numbers, and $\varepsilon_{i}$, $i=1,\dots,r$, are
positive real numbers.

If $X$ is the set mentioned in Example \ref{example3.5}, 
we see that for $(r,s,t)=(a,-b,0)$, we have
$r+s+t\neq 0$ and the set $rX+sX+tX$ will not contain a Bohr set since
it has arbitrary large gaps (while a Bohr set has bounded gaps).

\section{\bf Additive tools}
\label{S3}

We will need the following consequence of a result by Freiman known as
Freiman's $3k-3$ Theorem. It asserts that for a given finite set $X$ of $k$ mutually
coprime nonnegative integers
containing $0$ with largest element $m$, one has $|X+X|\ge\min(3k-3,k+m)$.

\begin{lemma}
\label{lemma4.1}
Suppose that $X\subset \N$, $0\in X$ and $\gcd(X)=1$. Then

{\rm(i)} if $\overline{d}(X)\le 1/2$, then
$\overline{d}(X+X)\geq 3\overline{d}(X)/2,$

{\rm(ii)} if $\overline{d}(X)>1/2$, then
$\overline{d}(X+X)\geq (1+\overline{d}(X))/2$.
\end{lemma}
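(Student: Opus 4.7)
The approach is to pass to a finite subset of $X$ along a sequence of intervals realising the upper density $d := \overline{d}(X)$, apply Freiman's $3k-3$ theorem to that finite set, and take the limit.

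First I would choose integers $n_j \to \infty$ with $|X \cap [1, n_j]|/n_j \to d$, and then replace $n_j$ by $m_j := \max(X \cap [0, n_j])$. A short argument shows that $m_j/n_j \to 1$: if $m_j/n_j \to c < 1$ along some subsequence, then the same elements lie in $[0, m_j]$, giving $|X \cap [0, m_j]|/m_j \to d/c > d$ and contradicting $\overline{d}(X) = d$. Setting $k_j := |X \cap [0, m_j]|$, we therefore have $k_j/m_j \to d$, the set $X_j := X \cap [0, m_j]$ contains $0$, and its largest element is $m_j$. Since $\gcd(X) = 1$ is already witnessed by some finite subset of $X$, we also have $\gcd(X_j) = 1$ for all sufficiently large $j$.

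Freiman's $3k-3$ theorem (quoted just above the statement) then gives
$$
|X_j + X_j| \;\geq\; \min\bigl(3 k_j - 3,\; k_j + m_j\bigr).
$$
Since $X_j + X_j \subseteq [0, 2m_j]$, dividing by $2m_j$ and letting $j \to \infty$ yields
$$
\overline{d}(X + X) \;\geq\; \liminf_{j \to \infty} \frac{\min(3k_j - 3,\, k_j + m_j)}{2m_j} \;=\; \min\!\left(\frac{3d}{2},\; \frac{d+1}{2}\right).
$$
A direct comparison shows that $3d/2 \leq (d+1)/2$ precisely when $d \leq 1/2$; this dichotomy is exactly the split between cases (i) and (ii) and delivers the two claimed bounds. (At the boundary $d=1/2$ the two estimates agree, so consistency is automatic.)

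The only point requiring any real care, and the main potential obstacle, is the reduction step: ensuring that along a single subsequence one simultaneously has $k_j/m_j \to d$, $\gcd(X_j) = 1$, and $m_j/n_j \to 1$ so that the normalising denominator $2m_j$ does not waste density. All three are delivered by the preliminary density argument above and by the fact that the coprimality hypothesis on $X$ is witnessed by finitely many elements, so after this book-keeping the lemma is essentially a one-shot consequence of Freiman's theorem.
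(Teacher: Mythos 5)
Your proof is correct and follows essentially the same route as the paper: truncate $X$ along a sequence realising the upper density, arrange for the right endpoint of the truncation to be its largest element, apply Freiman's $3k-3$ theorem, and divide by twice that endpoint. The only (harmless) differences are that you carry the $\min(3k_j-3,\,k_j+m_j)$ through to the limit and resolve the dichotomy $\overline{d}(X)\lessgtr 1/2$ there, whereas the paper decides in advance which branch of Freiman's bound is active, and that your verification that $m_j/n_j\to1$ makes explicit a normalisation step the paper leaves implicit.
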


This statement is known in the folklore (see for example \cite{r6}). For the sake
of completeness we give a proof of it now.

\begin{proof}
Let $\varepsilon>0$ and let $(n_k)_{k\ge1}$ be a sequence of positive
integers for which $|X_k|/n_{k}>\overline{d}(X)-\varepsilon$ where
$X_k:=X\cap [1,n_k]$ and $n_{k}\in X_{k}$. In both cases,
the result will follow from Freiman's $3k-3$ Theorem:

(i) First assume that $\overline{d}(X) <1/2$.
Since we have $\gcd(X_k)=1$ and clearly $n_{k}=\max(X_k)\geq 2|X_{k}|-3$ if
$k$ is large enough, Freiman's $3k-3$ Theorem yields $|X_k+X_k|\geq 3|X_k|-3.$
Since $X_k+X_k$ lies in $[0,2n_k]$, we have  
$$
\frac{|X_k+X_k|}{2n_k}\geq
\frac{3|X_k|}{2n_k}-\frac{3}{2n_k}>\frac{3}{2}
\overline{d}(X)-\varepsilon-\frac{3}{2n_k}
$$ 
which implies the statement.

(ii) Here we suppose $\overline{d}(X) >1/2$.
Let $\varepsilon$ be sufficiently small. 
We have $\max(X_k)\leq 2|X_{k}|-4$ for any $k$ large enough.
By Freiman's $3k-3$ Theorem again, we get
$|X_k+X_k|\geq |X_k|+n_{k}$, thus
$$
\frac{|X_k+X_k|}{2n_k}\geq
\frac{|X_k|}{2n_k}+\frac12\ge\frac{\overline{d}(X)+1-\varepsilon}2.
$$

To complete this proof, it remains to treat the case $\overline{d}(X)=1/2$.
As above, we get for $k$ sufficiently large 
$|X_{k}+X_{k}|\ge\min(|X_{k}|+n_{k},3|X_{k}|-3)$,
thus 
$$
\frac{|X_{k}+X_{k}|}{2n_{k}}\ge
\min\left( \frac{3-2\varepsilon}4,\frac34-\varepsilon-\frac{3}{2n_k}\right),
$$
and the result follows.
\end{proof}

The following lemma generalizes a previous result obtained 
by Stewart and Tijdeman in \cite{r4}.

\begin{lemma}
\label{lemma4.2}
Let $X\subset \N$ and 
$a,b\in \N,$ such that $a\ge b\ge1$ and $\overline{d}(X)>a/(a+1)$. 
Then the gaps in both  sets $\Gamma_{a,b}(X)=aX-bX$  and 
$\Gamma_{b,a}(X)=bX-aX$ are
bounded from above by $a$.
\end{lemma}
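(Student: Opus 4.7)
Since $bX - aX = -(aX - bX)$, the two sets are reflections of each other and share the same gap structure; it is enough to prove that the gaps of $Y := aX - bX$ are at most $a$, which amounts to saying that every interval $[m, m + a - 1]$ of $a$ consecutive integers meets $Y$. Fixing $m \in \Z$, define $g : \N \to \Z$ by $g(x_2) := \lceil (m + b x_2)/a \rceil$; this is the unique integer for which $a g(x_2) - b x_2 \in [m, m + a - 1]$. Thus it suffices to exhibit some $x_2 \in X$ for which $g(x_2) \in X$ (with $g(x_2) \geq 1$).

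The plan is then a density intersection argument. First I would choose $d$ with $a/(a+1) < d \leq \overline{d}(X)$, and, using the limsup assumption, pick an arbitrarily large $N$ with $|X \cap [1, N]| > d N$. Set $A := X \cap [1, N]$ and $B := \{x_2 \in [1, N] : g(x_2) \in X\}$, so the target becomes $A \cap B \neq \varnothing$. Since $a \geq b \geq 1$, the map $g$ is non-decreasing with $g(N) \approx b N/a \leq N$, and each fibre $g^{-1}(y)$ is an interval of at most $\lceil a/b \rceil$ consecutive integers. Counting the complement, $|[1, N] \setminus B| \leq \lceil a/b \rceil \cdot |(\N \setminus X) \cap [1, g(N)]|$, and combined with $|X \cap [1, g(N)]| \geq |X \cap [1, N]| - (N - g(N)) \geq g(N) - (1 - d) N$, this yields $|B| \geq N \bigl( 1 - \lceil a/b \rceil (1 - d) \bigr) - O(|m|)$.

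Adding the two estimates, $|A| + |B| > N + N \bigl( d (1 + \lceil a/b \rceil) - \lceil a/b \rceil \bigr) - O(|m|)$, which strictly exceeds $N$ whenever $d > \lceil a/b \rceil / (1 + \lceil a/b \rceil)$. Since $b \geq 1$ forces $\lceil a/b \rceil \leq a$, this threshold is at most $a/(a+1) < d$, so for $N$ sufficiently large $A \cap B \neq \varnothing$; any $x_2$ in this intersection, paired with $x_1 := g(x_2)$, furnishes an element of $[m, m + a - 1] \cap Y$. The hard part will be to keep the boundary corrections under control --- truncated fibres near $y = g(1)$ and $y = g(N)$, and the exclusion of small $x_2$'s with $g(x_2) \leq 0$ when $m < 0$ --- but all such corrections contribute only an additive $O(|m| + a/b)$ error, easily absorbed by the positive slack $d - a/(a+1)$ once $N$ is taken large enough; note that the hypothesis $\overline{d}(X) > a/(a+1)$ is tight precisely in the extremal case $b = 1$.
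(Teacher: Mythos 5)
Your proof is correct and is essentially the paper's own argument in a different notation: both rest on the same pigeonhole/density-intersection count on an initial segment $[1,N]$, matching each $x_2$ to the unique $x_1$ with $ax_1-bx_2$ in the target window and showing that $X$ and the preimage of $X$ under this matching must meet because the fibres have at most $\lceil a/b\rceil\le a$ elements while $\overline{d}(X)>a/(a+1)$. The only (harmless) deviation is that you dispose of $bX-aX$ by reflection, whereas the paper reruns the symmetric computation.
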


\begin{proof}
We first focus our attention to the set $\Gamma_{a,b}(X)=aX-bX$.

Let $n$ be a positive integer and put $t= n/b$. We define
$Y:=X\cap ( n/a + 1,kb]$ where the integer $k$ is large 
enough in order to have $|Y|>(1-\delta)kb$ where $\delta$ 
is chosen such that $1/(a+1) \ge \delta > 1-\overline{d}(X)$. Let
$$
Z:=\bigcup_{y\in Y}\left[(y-1)\frac ab -t,y\frac ab -t\right).
$$
Observe that $Y$ and $Z$ are subsets of $[1,ka]$ and that
$$
|Z|\ge \left\lfloor\frac ab \right\rfloor  |Y| 
> \left\lfloor\frac ab \right\rfloor(1-\delta)kb.
$$
If $Y\cap Z=\varnothing$, then we would have
$$
\left\lfloor\frac ab\right\rfloor(1-\delta)kb+(1-\delta)kb<ka,
$$
giving $(1-\delta)(\lfloor a/b \rfloor+1)< a/b$, a contradiction
to our assumption $\delta\le 1/(a+1)$.
Thus $Y\cap Z\ne\varnothing$. Hence there exist $y',y''\in Y$
such that 
$$
y'\in\left[(y''-1)\frac ab -t,y''\frac ab -t\right).
$$
We clearly thus have
$$
1\le ay''-by'-n\le a.
$$
This implies that for any positive integer 
$n\in aX-bX$ we can find an element $n'\in aX-bX$ such that
$1\le n'-n\le a$. 

The result for the set $bX-aX$ can be obtained by arguing similarly with 
$Y:=X\cap[1,kb-n/a-1]$ and 
$Z:=\bigcup_{y\in Y}\left(ya/b +t,(y+1) a/b +t\right]$.

This completes the proof of the lemma.
\end{proof}

\begin{lemma}
\label{lemma4.3} 
Let $t$ be any positive integer and $(a_1,b_1),(a_2,b_2), \dots, (a_{t},b_{t})$
be $t$ couples of positive integers. 
Assume that, for every $1 \le i \le t$, we have $1\leq a_i, b_i\leq L$,
for some integer $L \ge 2$.
Let $A$ be any set of nonnegative integers and $m \in \N$.

If $t \ge 2\log_{2} (m) + 4L + 2$ then there exist two positive integers
$\alpha\le L^t$ and $\beta\le L^t$ such that
$$
\bigcirc_{j=1}^t \Gamma_{a_j,b_j}(A) = \alpha Am -\beta Am+B,%\eqno (17)
$$
for some set of integers $B$.
\end{lemma}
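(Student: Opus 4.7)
The plan is to apply Lemma~\ref{lemma3.3} to expand the composition explicitly as a signed Minkowski sum of $2^t$ dilates of $A$, then to locate two coefficient values that each occur with high multiplicity and appropriate sign via a pigeonhole argument, and finally to regroup in order to extract the required $\alpha A m$ and $-\beta A m$ blocks.

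More precisely, Lemma~\ref{lemma3.3} gives
$$
\bigcirc_{j=1}^t \Gamma_{a_j,b_j}(A)=\sum_{I\sqcup J=[1,t]}(-1)^{|J|}c_{I,J}A,\qquad c_{I,J}:=\prod_{i\in I}a_i\prod_{j\in J}b_j\in[1,L^t].
$$
I would then bound the number of distinct coefficient values that occur. Since each $c_{I,J}$ involves only primes $p\le L$ and satisfies $v_p(c_{I,J})\le t\lfloor\log_p L\rfloor$, the number of distinct values is at most
$$
D(t,L):=\prod_{p\le L \text{ prime}}\bigl(t\lfloor\log_p L\rfloor+1\bigr)\le(2t\log_2 L)^{\pi(L)}.
$$
The positive (resp.\ negative) part of the expansion contains exactly $2^{t-1}$ summands; by the pigeonhole principle, each part contains some coefficient value $\alpha$ (resp.\ $\beta$) attained at least $2^{t-1}/D(t,L)$ times. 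Using the classical estimate $\pi(L)\ll L/\ln L$ together with a direct check for small $L$, the hypothesis $t\ge2\log_2 m+4L+2$ guarantees $2^{t-1}/D(t,L)\ge m$: in the required inequality $t-1\ge \log_2 m+\log_2 D(t,L)$, the additive slack $4L$ absorbs the contribution $\pi(L)\log_2\log_2 L$, while the extra $\log_2 m$ (beyond the single $\log_2 m$ that plain pigeonhole demands) absorbs $\pi(L)\log_2 t$.

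With $\alpha$ occurring $k_\alpha\ge m$ times with positive sign and $\beta$ occurring $k_\beta\ge m$ times with negative sign, the Minkowski identity
$$
\underbrace{\alpha A+\cdots+\alpha A}_{k_\alpha\text{ times}}=\alpha(A+\cdots+A)=\alpha Ak_\alpha=\alpha Am+\alpha A(k_\alpha-m)
$$
lets me extract a summand $\alpha Am$ from the positive part, and similarly $-\beta Am$ from the negative part. Absorbing everything else into a set $B\subset\Z$, obtained as the Minkowski sum (with signs) of the remaining dilates, yields the claimed decomposition $\bigcirc_{j=1}^t\Gamma_{a_j,b_j}(A)=\alpha Am-\beta Am+B$, with $\alpha,\beta\le L^t$ since every $c_{I,J}$ is bounded by $L^t$. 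The main obstacle is the numerical verification sketched above: the precise form of the bound $2\log_2 m+4L+2$ must exactly outweigh the pigeonhole factor $D(t,L)$, which requires a careful balancing of the $\log_2 m$, $L$, $\log_2 t$, and $\log_2\log_2 L$ terms.
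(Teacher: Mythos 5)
Your proposal is correct and follows essentially the same route as the paper: expand via Lemma~\ref{lemma3.3}, bound the number of distinct coefficient values, pigeonhole within the $2^{t-1}$ positive and $2^{t-1}$ negative terms to find $\alpha$ and $\beta$ each repeated at least $m$ times, and regroup using $cA+cA=cA2$. The only difference is cosmetic — the paper counts coefficient values as products $2^{\gamma_2}\cdots L^{\gamma_L}$ with $\sum\gamma_i\le t$, giving the bound $\binom{t+L}{L}\le(4t/L)^L$, and then closes the numerical step you leave as a sketch with the elementary observation that $u\ge2\log_2(x)+4$ and $x\ge1$ imply $2^u/u\ge4x$, applied to $u=t/L$ and $x=(2m)^{1/L}$.
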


\begin{proof}
An arbitrary ``coefficient" $\prod_{i\in I}a_i\cdot \prod_{j\in J}b_j$ 
appearing in the decomposition of $\bigcirc_{j=1}^t \Gamma_{a_j,b_j}(A)$ 
given by Lemma \ref{lemma3.3} namely
$$
\bigcirc_{j=1}^t \Gamma_{a_j,b_j}(X)=
\sum _{I\sqcup J=[1,t]}(-1)^{|J|}\big (\prod_{i\in I}a_i\cdot 
\prod_{j\in J}b_j\big )X
$$
can be written in the form $2^{\gamma_{2}}3^{\gamma_{3}}4^{\gamma_{4}}\dots
L^{\gamma_{L}}$ where the $\gamma_{i}$ are nonnegative  integers 
such that $\gamma_{2}+\gamma_{3}+\gamma_{4}+\cdots+\gamma_{L}\le t$.
Since  the number of $(L-1)$-uples 
$(\gamma_{2},\dots,\gamma_{L})$ satisfying the previous
conditions is less than or equal to $\binom{t+L-1}{L-1}$, 
it is bounded by 
$$
\binom{t+L}{L}\le  \frac{(t+L)^L}{L!} \le \left( \frac{et}{L} +e \right)^L 
\le \left( \frac{4t}{L} \right)^L
$$ 
by easy considerations and using $t\ge4L$ in the last inequality.
Hence there are at most $(4t/L)^L$ values which can be taken 
by a  ``coefficient" $\prod_{i\in I}a_i\cdot \prod_{j\in J}b_j$.

Thus in the decomposition of $\bigcirc_{j=1}^t \Gamma_{a_j,b_j}(A)$ 
given by Lemma \ref{lemma3.3}
(as the sum of the $2^{t-1}$ terms with a positive coefficient 
and $2^{t-1}$ terms with a negative one), there is some positive 
``coefficient'' denoted by $\alpha$,  and some negative
``coefficient'' denoted by $-\beta$  such that  
$1\le \alpha ,\beta \leq L^t$ and  which can be obtained in at least
$$
\left\lceil \frac{2^{t-1}}{(4t/L)^L} \right\rceil  %\eqno (18)
$$
ways. 

Observe now that if $u$ and $x$ are two positive real numbers such that
$u\ge 2\log_{2}(x)+4$ and $x\ge1$ then $2^u/u\ge 4x$. By applying this
with $u=t/L$ and $x=(2m)^{1/L}$, we get that 
$2^t / (4t/L)^L \ge 2m$ as far as $t\ge 2\log_{2}(2m)+4L$.
Hence the result.
\end{proof}

We end this section by stating without a proof 
a fitted version of Kneser's theorem 
for addition of increasing sequences of integers (see \cite{HR}). 
   
\begin{lemma}[Kneser]
\label{lemma4.4}
Let $X\subset\N$ and  $k$ be a positive integer. Assume that
$\underline{d}(X)>0$. Then either
$$
\underline{d}(Xk)\geq k\underline{d}(X),
$$
or there is a positive integer $g$ and a set $X' \subset \N$ 
satisfying $X'+g \subset X'$ such that $X\subset X',$ all sufficiently 
large elements of $X'k$ are in $Xk$, and
$$
\underline{d}(Xk)\geq k\underline{d}(X')-\frac{(k-1)}{g}.
$$
\end{lemma}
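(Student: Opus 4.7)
The plan is to deduce this density version from the classical Kneser inequality in a finite cyclic group, followed by a diagonal limit argument. For a large parameter $N$, let $X_N := X \cap [0,N]$. Choosing $M$ just above $kN$, so that the natural embedding $[0,kN] \hookrightarrow \Z/M\Z$ preserves $k$-fold sums of $X_N$, Kneser's theorem in $\Z/M\Z$ produces a subgroup $H_N = g_N \Z/M\Z$ (the stabilizer of the image of $k X_N$) satisfying
\begin{equation*}
|kX_N| \ge k |X_N + H_N| - (k-1)|H_N|.
\end{equation*}

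Next, I would extract a subsequence $N_i \to \infty$ along which $|Xk \cap [0,kN_i]|/(kN_i)$ converges to $\underline{d}(Xk)$. If $|H_{N_i}| = 1$ for infinitely many $i$, the inequality becomes $|kX_{N_i}| \ge k|X_{N_i}| - (k-1)$; dividing by $kN_i$ and letting $i \to \infty$ yields the first alternative $\underline{d}(Xk) \ge k\underline{d}(X)$. Otherwise $|H_{N_i}| \ge 2$ eventually, and a similar division shows that if $g_{N_i} \to \infty$ then the correction $(k-1)/g_{N_i}$ disappears in the limit and we again land in the first alternative. Hence by a further diagonal extraction I may assume $g_{N_i} = g$ is constant. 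I then define
\begin{equation*}
X' := \{ n \in \N : n + jg \in X \text{ for some } j \in \N \},
\end{equation*}
which automatically contains $X$ and satisfies $X' + g \subset X'$. When lifted back from $\Z/M\Z$ to $\Z$, the thickening $X_{N_i} + H_{N_i}$ agrees with $X' \cap [0,N_i]$ up to an $O(g)$ boundary error, so passing to the limit in the Kneser bound gives exactly the desired $\underline{d}(Xk) \ge k\underline{d}(X') - (k-1)/g$.

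The assertion that all sufficiently large elements of $X'k$ already lie in $Xk$ then follows from unpacking the fact that $kX_{N_i}$ is a union of cosets of $H_{N_i}$ in $\Z/M\Z$: any element $y$ of $X'k$ is congruent modulo $g$ to some element of $kX_{N_i}$, and provided $y$ lies well inside the range $[0,kN_i]$, the coset structure forces $y \in kX_{N_i} \subseteq kX$. The main obstacle is the passage to the limit: one has to ensure that the stabilizer parameter $g_{N_i}$ actually stabilizes to a single integer $g$ (rather than merely being bounded), and to control the $O(g)$ boundary discrepancies uniformly in $i$ so that the limiting inequality retains the sharp coefficient $k$ in front of $\underline{d}(X')$ and the precise subtracted term $(k-1)/g$. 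This bookkeeping, rather than the Kneser inequality itself, is the delicate part.
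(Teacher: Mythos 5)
The paper deliberately states this lemma \emph{without proof}, citing Halberstam--Roth: it is the asymptotic-density form of Kneser's theorem, whose proof there occupies a substantial part of the book. Your attempt to recover it from the finite cyclic Kneser theorem by a limiting argument has a fatal normalization gap. Embedding $X_N=X\cap[0,N]$ into $\Z/M\Z$ with $M$ just above $kN$, you only produce elements of $Xk$ all of whose $k$ summands lie in $[0,N]$; relative to the ambient interval $[0,kN]$ the set $X_N$ has density about $\underline{d}(X)/k$, not $\underline{d}(X)$. Concretely, in your trivial-stabilizer case the finite bound $|kX_{N_i}|\ge k|X_{N_i}|-(k-1)$, divided by $kN_i$, gives $\frac{k|X_{N_i}|}{kN_i}=\frac{|X_{N_i}|}{N_i}\ge \underline{d}(X)-o(1)$, i.e.\ only $\underline{d}(Xk)\ge \underline{d}(X)$ --- a factor $k$ short of the first alternative. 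The same loss invalidates your claim that $g_{N_i}\to\infty$ lands you in the first alternative. This is not bookkeeping: to get $|Xk\cap[0,n]|\gtrsim k\,\underline{d}(X)\,n$ one must count representations $x_1+\cdots+x_k\le n$ in which an individual summand may be as large as $n$, and no scheme that truncates every summand to $[0,n/k]$ can ever produce more than about $\underline{d}(X)\,n$ elements of $Xk$ below $n$. Overcoming exactly this obstacle is the hard content of the density Kneser theorem (handled in Halberstam--Roth via Kneser's transformation machinery, not by a compactness argument), which is why the authors quote the result rather than prove it.

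There are secondary defects as well: your $X'=\{n\in\N: n+jg\in X \text{ for some } j\in\N\}$ is closed \emph{downwards} under subtracting $g$, so $X'+g\subset X'$ fails for an element of $X$ having no larger congruent element of $X$ (the natural candidate is rather the union of the residue classes modulo $g$ met by $X$); and the stabilizers $H_{N_i}$ live in different groups $\Z/M_{N_i}\Z$, so before ``extracting a constant $g$'' you must first prove that the indices $M_{N_i}/|H_{N_i}|$ stay bounded. Those points are repairable; the factor-of-$k$ loss is not, within this strategy.
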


\section{\bf An inverse result for linear operations on a set of residues}
\label{S4}

For a given subset $U$ of an abelian group $G$ we denote
by $P(U)$ the maximal subgroup $H$ of $G$ such that $U+H=U$.
We call $P(U)$ the {\em period} of $U$.
The set $U$ is said to be periodic if $P(U)$ is not the trivial group $\{0\}$.

For a given positive integer $g$, 
a set $A$ of integers is said to be periodic or {\it semi-periodic} modulo $g$ if 
$A+g\subset A$. It is said {\it fully} periodic modulo $g$ if $A+g=A$, 
that is $A$ is a reunion
of complete arithmetic progressions modulo $g$ (notice, in particular,
that a fully periodic set of integers must be unbounded both from
below and from above). If $A$ is fully periodic modulo 
$g$, then $A+A'$ is also fully periodic modulo $g$ 
for any set $A'$ of integers.

\begin{lemma}
\label{lemma5.1}
Let $A$ and $A'$ be set of integers which are  semi-periodic 
modulo $g$ and $g'$ respectively. Then $A-A'$ is fully periodic modulo $\gcd(g,g')$.
\end{lemma}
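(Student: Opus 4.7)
Let me write $d = \gcd(g,g')$. The statement $A - A'$ is fully periodic modulo $d$ means $(A-A') + d = A - A'$, so it suffices to establish the two inclusions $(A-A') + d \subset A-A'$ and $(A-A') - d \subset A-A'$.

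The key observation is that semi-periodicity iterates: from $A + g \subset A$ one immediately gets $A + kg \subset A$ for every integer $k \geq 0$, and similarly $A' + kg' \subset A'$. So if I can write $d$ and $-d$ as $\mathbb{Z}$-combinations of $g$ and $g'$ where the coefficient of $g$ and the coefficient of $g'$ have \emph{opposite} signs, then given any $a - a' \in A - A'$ with $a \in A$, $a' \in A'$, I can absorb the positive multiple of $g$ into $a$ and the positive multiple of $g'$ into $a'$.

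Concretely, I would first invoke Bezout: writing $g = d g_1$ and $g' = dg_1'$ with $\gcd(g_1, g_1') = 1$, there exist integers satisfying $u g_1 - v g_1' = 1$; the general solution is $(u + kg_1',\, v + kg_1)$ for $k \in \mathbb{Z}$, so by taking $k$ sufficiently large I may select positive integers $u, v > 0$ with $ug - vg' = d$. By symmetry (swap the roles of $g$ and $g'$) I may likewise select positive integers $p, q > 0$ with $qg' - pg = d$, that is, $-d = pg - qg'$. Then for any $a \in A$ and $a' \in A'$,
\[
(a - a') + d = (a + ug) - (a' + vg') \in A - A',
\]
since $a + ug \in A$ and $a' + vg' \in A'$ by the iterated semi-periodicity, and similarly
\[
(a - a') - d = (a + pg) - (a' + qg') \in A - A'.
\]
This yields both inclusions and completes the argument.

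There is no real obstacle here; the only subtlety to flag is that semi-periodicity $A+g \subset A$ only provides shifts in the positive direction, which is precisely why one must express $+d$ and $-d$ \emph{separately} as $\mathbb{Z}_{\geq 0}$-combinations of $g$ and $-g'$ (respectively $-g$ and $g'$), rather than naively invoking a single Bezout identity for $d$.
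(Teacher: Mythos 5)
Your proof is correct and follows essentially the same route as the paper: express $+d$ and $-d$ separately as $ug - vg'$ with nonnegative coefficients, absorb the multiples into $A$ and $A'$ via iterated semi-periodicity, and conclude equality from the double inclusion. The only difference is that you spell out why the nonnegative Bezout coefficients exist, which the paper simply asserts.
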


\begin{proof}
Denote by $d$ the greater common divisor of $g$ and $g'$.
Then there exist nonnegative integers $u$ and $v$ such that $ug-vg'=d$
hence $A-A'+d\subset A-A'$. There exist also nonnegative integers
$u'$ and $v'$ such that $u'g-v'g'=-d$, hence $A-A'-d\subset A-A'$.
From this double inclusion, we conclude that $A-A'+d=A-A'$, as asserted.
\end{proof}

One easily sees that if $U$ is a subset of some abelian group $G$
such that $|U+U|=|U|$ then $U$ is a coset modulo some subgroup $H$ of $G$.
For $a$ and $b$ coprime, we will show a structure
result for the subsets $U$ of $\mathbb{Z}/g\mathbb{Z}$ such that $|aU+bU|=|U|$.
We first prove the following lemma.

\begin{lemma}
\label{lemma5.2}
Let $g$ be a positive integer and $X$ be a subset  of
$G=\mathbb{Z}/g\mathbb{Z}$  containing $0$.
Let $a$ and $b$ be two positive integers such that $\gcd(a,b)=1$. We assume that
$X$ is not periodic 
and that  $aX+bX=aX$. Then 
$$
X\subset\frac{g}{\gcd(g,b)}G.
$$
\end{lemma}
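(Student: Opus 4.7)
The plan is to proceed by induction on the modulus $g$, the base case $g=1$ being trivial. For the inductive step, the first observation is that, for every $y\in X$, the translate $by+aX$ is contained in $aX+bX=aX$ and has the same cardinality, hence equals $aX$; thus $bX\subseteq P(aX)$. Moreover, any $h\in P(aX)$ is of the form $a(x'-x)$ with $x,x'\in X$ (for any fixed $x\in X$ the element $h+ax\in aX$ is of the form $ax'$), so $P(aX)\subseteq aG$. Combining, $bX\subseteq aG=\delta G$ where $\delta:=\gcd(a,g)$.

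The crucial next step uses the coprimality hypothesis $\gcd(a,b)=1$. Since $\delta\mid a$, we have $\gcd(\delta,b)=1$; and the inclusion $bX\subseteq \delta G$ means that $\delta\mid bx$ in $\Z$ for each $x\in X$, which by coprimality forces $\delta\mid x$, i.e., $X\subseteq \delta G$. Two cases now arise. If $\delta=1$, then $a$ is invertible in $G$, and multiplying $aX+bX=aX$ by $a^{-1}$ gives $X+cX=X$ with $c:=a^{-1}b\in G$; cardinality together with $0\in X$ yields $cX\subseteq P(X)=\{0\}$, hence $bX=\{0\}$ and $X\subseteq (g/\gcd(g,b))G$. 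If $\delta>1$, then $X$, $aX$ and $bX$ all live inside the proper subgroup $G':=\delta G\cong \Z/(g/\delta)\Z$; the hypotheses of the lemma transfer verbatim to $G'$ (the set identity persists, non-periodicity of $X$ in $G'$ follows from non-periodicity in $G$, and $\gcd(a,b)=1$ is unchanged), and the induction hypothesis applied to the smaller modulus $g/\delta$ yields $X\subseteq ((g/\delta)/\gcd(g/\delta,b))G'$. Reinterpreting this inside $G$ gives $X\subseteq (g/\gcd(g/\delta,b))G\subseteq (g/\gcd(g,b))G$, the last inclusion holding because $\gcd(g/\delta,b)\mid \gcd(g,b)$.

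The main conceptual hurdle is the opening move: the double inclusion $bX\subseteq P(aX)\subseteq aG$, which squeezes $bX$ into $aG$ using only the identity $aX+bX=aX$. Once this is married with $\gcd(a,b)=1$, the descent on $g$ becomes essentially automatic, and the induction runs without further difficulty. The remaining care needed is merely in translating the conclusion of the induction from the subgroup $G'$ back to $G$ and checking the final subgroup inclusion, both of which are routine.
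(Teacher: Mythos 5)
Your proof is correct and follows essentially the same strategy as the paper's: squeeze $X$ into the subgroup generated by $a$, descend until $a$ becomes invertible modulo the remaining modulus, and then use non-periodicity to force $bX=\{0\}$. The only cosmetic differences are that you organize the descent as an induction on $g$ rather than prime by prime, and that you obtain $bX\subseteq aG$ via the period of $aX$, where the paper gets the same inclusion directly from $0\in X$ and $bX\subseteq aX+bX=aX\subseteq aG$.
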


\begin{proof}
Let $p$ be any prime factor of $\gcd(g,a)$ and write $g=p^{\alpha}m$
with $p\nmid m$. In view of $aX \subset pG$ and $aX+bX=aX$, we have 
$aX+bX\subset pG$. 
Since $aX$ is composed of multiples of $p$, we then must have 
$bX\subset pG$. Thus $X\subset pG$ since $p\nmid b$. 
By a straightforward induction, we get $X\subset p^{\alpha}G$. 
Taking into account each prime factor of $g$, we obtain $X\subset a'G$
where
$$
a'=\prod_{\substack{p\mid\gcd(a,g)\\
p^{\alpha}\|g}}p^{\alpha}.
$$

Now, the set $X$ can be lifted in $\mathbb{Z}$ into a set $a'Z$ of
multiples of $a'$ for which we have
$aZ+bZ=aZ$ modulo $g/a'$. Since $a$ and $g/a'$
are coprime, we can find an integer $a''$ 
such that $aa''\equiv1$ modulo  $g/a'$. 
We deduce therefore $Z+a''bZ=Z$ modulo $g/a'$, yielding $X+a''bX=X$.
Since $X$ is not periodic, it follows that $a''bX=\{0\}$ and, 
in view of $\gcd(g,a'')=1$, $bX=\{0\}$. This implies $X\subset (g/\gcd(g,b))G$.
\end{proof}

\renewcommand{\theenumi}{\roman{enumi}}

\begin{prop}
\label{proposition5.3}
Let $g$ be a positive integer and $U$ be a subset of $G=\mathbb{Z}/g\mathbb{Z}$.
Let $a$ and $b$ be two positive integers such that $\gcd(a,b)=1$.
Then
\begin{enumerate}
\item For any subgroup $H$ of $G$, we have $aH+bH=H$,

\item $|aU+bU|\ge|U|$,

\item Assume that $0\in U$, that $U$ is not periodic and that $U$ is not included in a
proper (i.e. $\neq G$) subgroup of $G$.
Then the equality $|aU+bU|=|U|$ occurs if and only if
$g=\gcd(g,a)\times\gcd(g,b)$ (or equivalently $g\mid ab$) 
and if there exist two sets $V\subset \gcd(g,b)G$ and
$X\subset \gcd(g,a)G$
such that $U=V+X$ and $|U|=|V|\times|X|$,

\item Assume that $0\in U$ and that $U$ is not included in a
proper subgroup of $G$.
Then the equality $|aU+bU|=|U|$ occurs if and only if
there exist two integers $a_{1}$, $b_{1}$ and two subsets $V$, $X$ of $G$
such that $a_{1}\mid\gcd(g,a)$, $b_{1}\mid\gcd(g,b)$, $V\subset a_{1}G$,
$X\subset b_{1}G$ and $U=V+X+a_{1}b_{1}G$ with
$|U|=|V|\times|X|\times|a_{1}b_{1}G|$,

\item If $|aU+bU|=|U|$ then the period of $aU+bU$ coincides with that of $U$.
\end{enumerate}
\end{prop}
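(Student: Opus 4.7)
The plan is to prove the five parts of Proposition~\ref{proposition5.3} in the order (i), (v), (ii), (iii), (iv). For (i), $aH+bH\subset H$ is immediate since $H$ is a subgroup, and Bezout provides $u,v\in\mathbb{Z}$ with $ua+vb=1$, so every $h\in H$ equals $a(uh)+b(vh)\in aH+bH$. For (v), let $K=P(U)$ and $L=P(aU+bU)$. Item (i) applied to $K$ yields $aK+bK=K$, hence $aU+bU+K=a(U+K)+b(U+K)=aU+bU$, so $K\subset L$; note that this inclusion uses only (i). Conversely, if $L\not\subset K$ then $U':=U+L\supsetneq U$, while $aU'+bU'=aU+bU+aL+bL=aU+bU$ has cardinality $|U|<|U'|$, contradicting (ii).

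For (ii), I translate so that $0\in U$ and induct on $|G|$. If $K=P(aU+bU)$ is nontrivial, the projection $\pi:G\to G/K$ sends $aU+bU$ to the aperiodic set $a\pi(U)+b\pi(U)$; the inductive hypothesis gives $|a\pi(U)+b\pi(U)|\ge|\pi(U)|$, and multiplying by $|K|$ yields $|aU+bU|\ge|U|$. In the base case $K=\{0\}$, the inclusion $P(U)\subset P(aU+bU)$ already established via (i) forces $P(U)=\{0\}$; Kneser's theorem gives $|aU+bU|\ge|aU|+|bU|-1$, and combining this with the fiber bounds $|aU|\ge|U|/\gcd(g,a)$, $|bU|\ge|U|/\gcd(g,b)$ (both strict when $K_a,K_b\neq\{0\}$, since $U$ is aperiodic) and the coprimality $\gcd(\gcd(g,a),\gcd(g,b))\mid\gcd(a,b)=1$ concludes.

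For (iii), the forward direction is a direct computation. Under $g=\gcd(g,a)\gcd(g,b)$, the subgroups $\gcd(g,a)G$ and $\gcd(g,b)G$ have coprime orders and $G$ is their direct sum. The identity $\gcd(g,a\gcd(g,b))=g$ gives $aV=\{0\}$ for $V\subset\gcd(g,b)G$ containing $0$; symmetrically $bX=\{0\}$. Hence $aU+bU=aX+bV$, and since $\gcd(a,\gcd(g,b))=\gcd(b,\gcd(g,a))=1$, multiplication by $a$ on $\gcd(g,a)G$ and by $b$ on $\gcd(g,b)G$ are bijective, yielding $|aX|=|X|$, $|bV|=|V|$ and $|aU+bU|=|X||V|=|U|$ by the direct sum. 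For the reverse direction, set $K_a=\{x\in G:ax=0\}$ and $K_b=\{x\in G:bx=0\}$. By (v), $P(aU+bU)=\{0\}$, so Kneser yields $|aU|+|bU|\le|U|+1$. Since $K_a\cap K_b=\{0\}$ (as $\gcd(|K_a|,|K_b|)\mid\gcd(a,b)=1$), the diagonal map $G\to G/K_a\times G/K_b$ is injective, giving $|aU|\cdot|bU|\ge|U|$. Combining these bounds with the fiber estimates and invoking Lemma~\ref{lemma5.2} on suitable projected subsets forces $\gcd(g,a)\gcd(g,b)=g$; the direct sum structure then yields $U=V+X$ with $V\subset\gcd(g,b)G$, $X\subset\gcd(g,a)G$, and $|U|=|V||X|$.

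For (iv), set $H=P(U)$ and pass to $\bar G=G/H$, where the image $\bar U$ of $U$ is aperiodic, contains $\bar 0$, generates $\bar G$, and satisfies $|a\bar U+b\bar U|=|\bar U|$. Applying (iii) gives $|\bar G|\mid ab$; setting $a_1=\gcd(|\bar G|,a)$, $b_1=\gcd(|\bar G|,b)$ one has $a_1 b_1=|\bar G|$, $a_1\mid\gcd(g,a)$, $b_1\mid\gcd(g,b)$, and $H=a_1 b_1 G$. Lifting the decomposition $\bar U=\bar V+\bar X$ to $V\subset a_1 G$, $X\subset b_1 G$ (possible since $H\subset a_1 G\cap b_1 G$) produces $U=V+X+a_1 b_1 G$ with $|U|=|V||X||a_1 b_1 G|$. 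The main obstacle is the reverse direction of (iii), specifically forcing $g\mid ab$ from $|aU+bU|=|U|$: the naive inequalities from Kneser and the fiber bounds do not suffice on their own, and one must carefully exploit Kneser's equality case together with Lemma~\ref{lemma5.2} applied to projected subsets of $U$ to pin down the structure.
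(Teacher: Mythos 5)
There are two genuine gaps, both at the point where the real work of the proposition happens. The first is the base case of your proof of (ii). Writing $d_a=\gcd(g,a)$ and $d_b=\gcd(g,b)$, the chain you propose is Kneser's bound $|aU+bU|\ge |aU|+|bU|-1$ together with $|aU|\ge |U|/d_a$ and $|bU|\ge |U|/d_b$; when $d_a,d_b\ge 2$ this gives at best roughly $\bigl(\tfrac1{d_a}+\tfrac1{d_b}\bigr)|U|-1\le \tfrac56|U|-1<|U|$, and the ``strictness'' refinement gains only $O(1)$. This is not a vacuous worry: take $g=15$, $a=3$, $b=5$ and let $U$ correspond to $\{0,1\}\times\{0,1,2,3\}$ under $\mathbb{Z}/15\mathbb{Z}\cong\mathbb{Z}/3\mathbb{Z}\times\mathbb{Z}/5\mathbb{Z}$. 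Then $U$ and $3U+5U$ are both aperiodic (so you are in your base case), $|U|=8$, $|3U|=4$, $|5U|=2$, and your inequalities only give $|3U+5U|\ge 5$, whereas the truth is $|3U+5U|=8$. The inequality $|aU+bU|\ge|U|$ is a product (transversality) phenomenon, not an additive one, and Kneser's $|A|+|B|-1$ bound cannot reach it. The paper's argument supplies exactly the missing structure: write $g=a'b'$ with $\gcd(a,b')=\gcd(b,a')=1$, decompose $U$ into its $r$ nonempty components $u_j+X_j$ modulo $b'G$, note that for fixed $k$ the sets $aU_j+bU_k$ lie in $r$ distinct cosets of $b'G$ (invertibility of $a$ modulo $b'$) and each has size at least $|bX_k|=|X_k|$, whence $|aU+bU|\ge r\max_k|X_k|\ge\sum_k|X_k|=|U|$.

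The second gap is the reverse implication of (iii): deducing $g=\gcd(g,a)\gcd(g,b)$ and the splitting $U=V+X$ from $|aU+bU|=|U|$ is only sketched, and you say yourself that the naive Kneser-plus-fiber bounds ``do not suffice on their own.'' Since (iv) is deduced from (iii), and your (iv) is what makes the ``if'' directions of (iii)--(iv) and the statement usable downstream, this is where the proof is genuinely incomplete. The paper obtains (iii) by tracing the equality case of the displayed chain above: equality forces $|X_k|=|U|/r$ for every $k$ and that all $X_k$ are translates of a single $X$, hence $U=V+X$; the resulting relations $aX+bX=bX$ and $aV+bV=aV$, fed into Lemma~\ref{lemma5.2} (the sets $X$ and $V$ inherit non-periodicity from $U$), place $X$ and $V$ inside $(g/\gcd(g,a))G$ and $(g/\gcd(g,b))G$, and the hypothesis that $U$ generates $G$ then forces $g=\gcd(g,a)\gcd(g,b)$. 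On the positive side, your proof of (v) from (i) and (ii) alone (if $P(aU+bU)\not\subset P(U)$, then $U'=U+P(aU+bU)$ strictly contains $U$ while $aU'+bU'=aU+bU$ has only $|U|$ elements, contradicting (ii)) is correct and simpler than the paper's route through (iv) and the Euler-totient iteration; just reorder so that (ii) precedes the converse inclusion in (v). As written, however, the proposal does not establish (ii), (iii) or (iv).
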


\begin{proof}
(i) Since any subgroup of a cyclic group is also cyclic,
we may consider a generating element $\alpha$ of $H$.
Since $a$ and $b$ are coprime, there exist integers $h$ and $k$ such that
$ah+bk=1$ by Bezout theorem. It follows that 
$\alpha = a (h \alpha) + b (k \alpha) \in aH+bH$
and therefore $H\subset aH+bH$. The converse inclusion is clear.
\smallskip
 
(ii) Since translating $U$ does not change the cardinalities involved, 
we may freely assume that $0\in U$. Since $\gcd (a,b)=1$, we may write 
$g$ in the form $g=a'b'$ with $\gcd(a,b')=\gcd(b,a')=1$.

We shall consider the decomposition of $U$ as the disjoint union 
of its components in the cosets modulo the subgroup $b'G$ of $G$. 
Let $r$ be the number of cosets $C$ modulo $b'G$ such that intersection 
$U \cap C$ is non-empty.
There exist elements $u_{j}\in U$, sets $X_{j}\subset b'G$ $(j=0,\dots,r-1)$, 
containing $0$ such that $u_{j}-u_{h}\not\in b'G$ if $j\ne h$ and
$$
U=\bigsqcup_{j=0}^{r-1}(u_{j}+X_{j}) = \bigsqcup_{j=0}^{r-1}U_{j},
$$
by writing $U_{j}=u_{j}+X_{j}$.

Let 
$$
V=\{u_{0},\dots,u_{r-1}\}.
$$ 
Since $0\in U$,  we can take $u_{0}=0$.

Let $k$ be a fixed index, $0 \le k \le r-1$. For any $j$, we have 
$$
aU_{j}+bU_{k}=au_{j}+bu_{k}+aX_{j}+bX_{k} \subset au_{j}+bu_{k}+b'G.
$$
It follows that the non-emptiness of $(aU_{j}+bU_{k})\cap(aU_{h}+bU_{k})$ 
implies $au_j+bu_k =au_h+bu_k \pmod{b'}$ and, since $\gcd(a,b')=1$, 
$u_j=u_h$ which finally gives $j=h$. Therefore the sets $aU_{j}+bU_{k}$ 
for $0 \le j \le r-1$ are disjoint. 
Moreover
$$
|aU_{j}+bU_{k}|=|aX_{j}+bX_{k}|\ge|bX_{k}|.
$$
But since $\gcd(g/b',b)=\gcd(a',b)=1$ and $X_{k}\subset b'G$,
we have $|bX_{k}|=|X_{k}|$. From these facts, we deduce
\begin{equation}
\label{equation1}
|aU+bU| = \left|\bigcup_{j,k=0}^{r-1}(aU_{j}+bU_{k})\right| 
\ge \left|\bigsqcup_{j=0}^{r-1}(aU_{j}+bU_{k})\right|=
\sum_{j=0}^{r-1}|aU_{j}+bU_{k}| \ge r|bX_{k}| = r|X_{k}|.
\end{equation}

Since the previous result is valid for any index $k$, 
it follows that
\begin{equation}
\label{equation2}
|aU+bU|\ge r\max_{0\le k\le r-1}|X_{k}|\ge  \sum_{k=0}^{r-1}|X_{k}|=|U|.
\end{equation}
\smallskip

(iii) If the equality $|aU+bU|=|U|$ holds then the inequalities 
in \eqref{equation1} and \eqref{equation2} are equalities. Equality 
in \eqref{equation2} yields $|X_{k}|=|X_{0}|=|U|/r$ for any index $k$.
Equalities in \eqref{equation1} show that for any $k$ we have
\begin{equation}
\label{equation3}
aU+bU=\bigsqcup_{j=0}^{r-1}(au_{j}+bu_{k}+aX_{j}+bX_{k})
=\bigsqcup_{j=0}^{r-1}(au_{j}+bu_{k}+bX_{k})=aV+bu_{k}+bX_{k},
\end{equation}
(here we have used the fact that $0$ belongs to all the $X_j$'s). 
Specializing $k=0$, we get
\begin{equation}
\label{equation4}
aU+bU=\bigsqcup_{j=0}^{r-1}(au_{j}+bX_{0})=aV+bX_{0}.
\end{equation}
We also notice that if we identify the intersection with $b'G$ of 
the second and the third member of \eqref{equation3} (choosing $k=0$) 
we obtain
\begin{equation}
\label{equation5}
aX_0+bX_0=bX_0.
\end{equation}

Both \eqref{equation3} and \eqref{equation4} give decompositions
of $aU+bU$ into unions of subsets of disjoint
cosets modulo $b'G$, hence
for any $j$ and $k$, there exists $h$ such that
\begin{equation}
\label{equation6}
au_{j}+bu_{k}+bX_{k}=au_{h}+bX_{0}.
\end{equation}
Using the facts $X_{k}\subset b'G$ and $\gcd(g,b)\mid b'$  which implies
$\gcd(g/b',b)=1$, we deduce that $X_{k}$ is a translate of $X_{0}$.
Changing if necessary  $u_{k}$, we may now assume that $X_{k}=X_{0}$ for
each index $k$. Letting $X:=X_{0}$, we get 
$$
U=V+X
$$
as announced. The equality $|U|=|V| \times |X|$ follows from $|X_0|=|U|/r$, 
obtained at the very beginning of this proof.

Since $X\subset b' G$ and 
$\gcd( g/b',b)=1$, it is useful to note that $X$ is periodic if and only 
if $bX$ is periodic. But, by assumption, $U$ is not periodic, therefore 
$X$ cannot be periodic either. Hence $bX$ is not periodic, by the previous 
observation. By \eqref{equation5} we have $aX+bX=bX$, hence $aX=\{0\}$ and by Lemma \ref{lemma5.2},
we get  
$$
X\subset \frac{g}{\gcd(g,a)}G.
$$
The non-periodicity of $X$ (which would imply that of $U$) also implies 
with \eqref{equation6} that $au_{j}+bu_{k}=au_{h}$ yielding $aV+bV=aV$.
By Lemma \ref{lemma5.2} again with the fact that $V$ cannot be periodic 
(for the same reason as $X$), we get 
$$
V \subset \frac{g}{\gcd(g,b)}G.
$$
This gives 
$$
U=V+X\subset \gcd \left( \frac{g}{\gcd(g,b)},\frac{g}{\gcd(g,a)} \right) G
=\frac{g}{\gcd(g,a)\times\gcd(g,b)}G.
$$ 
Since $U$ is not included in a proper subgroup of $G$, 
we must have $g=\gcd(g,a)\times\gcd(g,b)$, thus
$g\mid ab$, as asserted.

Conversely, if $U=V+X$ where
$V\subset (g/\gcd(g,b))G$, $X\subset (g/\gcd(g,a))G$,
$g=\gcd(g,a)\times \gcd(g,b)$ and $|U|=|V|\times|X|$,
then clearly $aU+bU=aV+bX$ has cardinality less
than or equal to $|V|\times |X| = |U|$ and 
the equality follows from (ii).
\smallskip

(iv) We let $H=P(U)$ be the period of $U$ in $G$ and denote by $\psi$ 
the canonical homomorphism $G \rightarrow G/H$.  
The assumption implies that $| a U/H + b U/H |= |U/H|$ where 
$U/H=\psi (U)$. We now apply (iii) to the subset $U/H$ 
in the factor group $G/H$ which is isomorphic to $\mathbb{Z}/g_{1}\mathbb{Z}$
where $g_{1}=|G/H|$. We get 
$$
U/H = V_{1}+X_{1}
$$ 
where
$V_{1}\subset a_{1}G/H$, $X_{1}\subset b_{1}G/H$, 
$a_{1}=\gcd(g_{1},a)$, $b_{1}=\gcd(g_{1},b)$ and
$g_{1}=a_{1}b_{1}$
with the property that $|U|=|V_{1}|\times|X_{1}|\times |H|$. 
We infer $|G|=a_{1}b_{1}|H|$ and $H=a_{1}b_{1}G$.
For each coset modulo $H$ in $V_{1}$, we select an
arbitrary representative element in $G$. This gives a subset $V$
of $a_{1}G$ with $|V|=|V_{1}|$. Similarly, we obtain a subset $X$ of $b_{1}G$
formed by representative elements of the cosets modulo $H$ in $X_{1}$.
We conclude that  $U=V+X+H$ with $|U|=|V|\times|X|\times|H|$, as asserted.

Conversely, if $U$ can be written under the form $U=V+X+a_{1}b_{1}G$
for some integers $a_{1}$ and $b_{1}$ dividing respectively $\gcd(g,a)$ and $\gcd(g,b)$
with $V\subset a_{1}G$, $X\subset b_{1}G$, $|U|=|V|\times|X|\times
|a_{1}b_{1}G|$, then the set
$$
aU+bU=aV+bX+a_{1}b_{1}G
$$ 
has cardinality at most equal to
$|V|\times|X|\times|a_{1}b_{1}G|\le |U|$, thus equality occurs by (ii).

 (v) We obviously have $P(U)\subset P(aU+bU)$.
 By the previous point, we have $U=V+X+H$ where $H=P(U)=a_{1}b_{1}G$ is the
 period of $U$ and $V\subset a_{1}G$ and $X\subset b_{1}G$ for two integers
 $a_{1}$ and $b_{1}$ such that
 $a_{1}\mid\gcd(g,a)$ and $b_{1}\mid\gcd(g,b)$. We let
 $U=U_{0}$ and $U_{i+1}=\Gamma_{a,b}(U_{i})$, $i\ge0$.
 The sequence $(P(U_{i}))_{i\ge0}$ is non-decreasing.
 This gives 
 $$
 aU+bU=aV+bX+(aH+bH+bV+aX)=aV+bX+H
 $$
 since $bV,aX\subset H$ and $aH+bH=H$ by (i).
 Let us denote by $\varphi$ the Euler totient function.
 By iterating $k:=\varphi(a)\varphi(b)$ many times this linear operation on $U$
 we get the set 
 $$
 U_{k}=a^{\varphi(a)\varphi(b)}V+b^{\varphi(a)\varphi(b)}X+H.
 $$
 Since $a^{\varphi(b)}\equiv1$ modulo $b$ and $b^{\varphi(a)}\equiv1$ modulo $a$,
 we have $U_{k}=V+X+H=U$. It follows that $P(U)$ contains $P(U_{i})$ for any
 $0\le i\le k$, thus $P(aU+bU)\subset P(U_{0})=H$.
 \end{proof}

\section{\bf Composition and stability for a set of integers with positive upper density}
\label{S5}

The main result of the paper is the following theorem.

\begin{thm}
\label{theorem6.1}
Let $L\ge2$ be an integer, $A$ be an increasing sequence of integers and
assume that  $\overline{d}(A)>0$. 
Let $(a_{j},b_{j})_{j\in\mathbb{N}}$ be a sequence of couples 
of positive integers such that  $a_j\le L$, $b_j\le L$, $\gcd(a_{j},b_{j})=1$ for any $j\ge1$
and $(\Gamma_{a_j,b_j})_{j\in \N}$ be the corresponding  sequence 
of linear operations. We denote
$$
\Gamma_{k}=\bigcirc_{j=1}^k \Gamma_{a_j,b_j},\quad k\in\N,
$$
and $\beta = 1/\overline{d}(A)$.
Let
\begin{equation}
\label{equation7}
K=\lfloor c(\log_{2}(\beta) +L)\rfloor
\end{equation}
be a positive integer and 
$c$ is a sufficiently large absolute constant.
Then

{\rm(i)} there exists a modulus $g$ satisfying
$$
g\le L^{K+1}
$$
such that for any $k\ge K$, $\Gamma_{k}(A)$ is fully periodic modulo $g$,

{\rm(ii)} the sequence $(\Gamma_{k}(A))_{k\ge1}$ is $(K+g^{3}L^2)$-stable.
 \end{thm}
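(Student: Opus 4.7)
The proof naturally splits into two phases: showing that a period emerges (statement~(i)) and bounding the number of steps until full stability (statement~(ii)).

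\emph{Phase 1: emergence of a period.} After translating and scaling I may assume $0 \in A$ and $\gcd(A) = 1$. Starting from $\overline d(A) = 1/\beta$, I iterate Lemma~\ref{lemma4.1}: density grows by a factor $\ge 3/2$ while below $1/2$ and the deficit $1 - \overline d(Am)$ halves once above, so for some $m$ with $\log_2 m = O(\log_2 \beta + L)$ I achieve $\overline d(Am) > L/(L+1)$. Taking $c$ large enough in~\eqref{equation7} forces $K \ge 2\log_2 m + 4L + 2$; Lemma~\ref{lemma4.3} then factors
$$
\Gamma_K(A) = \alpha_0 Am - \beta_0 Am + B
$$
with $1 \le \alpha_0, \beta_0 \le L^K$ and some set $B \subset \Z$. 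Using $\overline d(Am) > L/(L+1)$ and dividing out $d = \gcd(\alpha_0,\beta_0)$ to reduce to coprime coefficients, a tailored window argument inside $Am$ (essentially a refined version of Lemma~\ref{lemma4.2}) shows that $\alpha_0 Am - \beta_0 Am$ has bounded gaps, which transfers to $\Gamma_K(A)$ and yields $\underline d(\Gamma_K(A)) > 0$.

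\emph{Phase 1, continued.} Positive lower density lets me invoke Kneser's theorem (Lemma~\ref{lemma4.4}) on sumsets of $\Gamma_K(A)$: the non-trivial alternative produces a semi-periodic ``envelope'' of period $g$ dividing a suitable product of the coefficients $a_j, b_j$, and a careful count bounds $g \le L^{K+1}$. Thus $\Gamma_k(A)$ is semi-periodic modulo $g$ for every $k \ge K$. The upgrade to \emph{full} periodicity is immediate from Lemma~\ref{lemma5.1}: since $\gcd(a_{k+1}, b_{k+1}) = 1$, the set $\Gamma_{k+1}(A) = a_{k+1}\Gamma_k(A) - b_{k+1}\Gamma_k(A)$ is fully periodic modulo $g$, and this property propagates to all subsequent iterates. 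This proves~(i).

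\emph{Phase 2: stability.} Projecting to $G = \Z/g\Z$, I let $U_k$ be the image of $\Gamma_k(A)$; the dynamics read $U_{k+1} = a_{k+1} U_k - b_{k+1} U_k$. Proposition~\ref{proposition5.3}(ii) makes the sequence $|U_k|$ non-decreasing, so it stabilizes within at most $g$ further steps. Once $|U_k|$ is constant, Proposition~\ref{proposition5.3}(v) freezes the period $H = P(U_k)$, and Proposition~\ref{proposition5.3}(iv) forces the rigid decomposition $U_k = V_k + X_k + H$ with $V_k, X_k$ in tightly constrained subgroups. Enumerating the possible pairs $(V_k, X_k)$ (of order $O(g^2)$) and combining with the bounded-coefficient transitions (at most $L^2$ of them) yields true stability within the announced $g^3 L^2$ window after step~$K$.

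\emph{Main obstacle.} The technically hardest point is deducing bounded gaps for $\alpha_0 Am - \beta_0 Am$ even though $\alpha_0, \beta_0$ may be as large as $L^K$: a direct application of Lemma~\ref{lemma4.2} would demand $\overline d(Am) > \alpha_0/(\alpha_0 + 1)$, which exceeds the density budget afforded by the choice of $K$. The intended workaround is to factor out $d = \gcd(\alpha_0, \beta_0)$ and solve $(\alpha_0/d)x - (\beta_0/d)y = n$ with $x, y$ in short windows of $Am$ guaranteed by $\overline d(Am) > L/(L+1)$, exploiting coprimality in the reduced coefficients rather than the coefficients themselves. This refined analysis is also what calibrates the final bound $g \le L^{K+1}$ in statement~(i).
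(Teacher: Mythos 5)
There is a genuine gap in your Phase 1, precisely at the point you yourself flag as the ``main obstacle'': deducing bounded gaps for $\alpha_0 Am-\beta_0 Am$ when $\alpha_0,\beta_0$ may be as large as $L^K$. Your proposed workaround --- dividing by $d=\gcd(\alpha_0,\beta_0)$ and running a window argument with the coprime reduced coefficients --- does not repair this. The obstruction in Lemma \ref{lemma4.2} is metric, not arithmetic: its proof needs the \emph{complement} of the set to have upper density below roughly $1/(a+1)$ so that the translated windows are forced to meet the set. With only $\overline d(Am)>L/(L+1)$ the complement can have density as large as $1/(L+1)$, while the reduced coefficients $\alpha_0/d,\beta_0/d$ can still be of size $L^{K}$; coprimality removes congruence obstructions but does nothing against gaps, and Example \ref{example3.5} shows exactly this failure mode (coefficients large relative to the density produce arbitrarily large gaps). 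A secondary weakness in the same phase: even granting $\underline d(\Gamma_K(A))>0$, Kneser's theorem only gives semi-periodicity of the $h$-fold sumsets $\Gamma_K(A)h$, whereas the later iterates $\Gamma_k(A)$, $k>K$, are built from dilations and differences of $\Gamma_K(A)$, not from its sumsets, so the transfer of the period to all $\Gamma_k(A)$ is not justified as written.

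The paper sidesteps the obstacle entirely by reordering the steps so that Lemma \ref{lemma4.2} is only ever applied with the \emph{small} coefficients $a_K,b_K\le L$. It applies Lemma \ref{lemma4.3} at level $K-1$, writing $\Gamma_{K-1}(A)=\alpha Y(q+1)-\beta Y(q+1)+T$ with $Y=An$ of upper density $>q/(q+1)$, $q=\max(a_K,b_K)$. Then the single final operation gives $\Gamma_K(A)=\alpha Z(q+1)-\beta Z(q+1)+(a_KT-b_KT)$ where $Z=a_KY-b_KY$. Lemma \ref{lemma4.2} (with coefficients $\le L$) and Kneser give semi-periodicity of $Z(q+1)$ modulo some $g_1\le q^2$; the crucial point you miss is that no gap or density estimate is needed for the dilated sets, because semi-periodicity modulo $g_1$ trivially dilates to semi-periodicity of $\alpha Z(q+1)$ modulo $\alpha g_1$, and Lemma \ref{lemma5.1} then makes the difference $\alpha Z(q+1)-\beta Z(q+1)$ \emph{fully} periodic modulo $\gcd(\alpha,\beta)g_1\le L^{K+1}$; full periodicity is preserved under all subsequent operations. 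Your Phase 2 (projecting to $\Z/g\Z$, monotonicity of $|U_k|$ via Proposition \ref{proposition5.3}(ii), and the $g^3L^2$ count via parts (iv) and (v)) is essentially the paper's argument and is fine, but it rests on part (i), whose proof as you sketch it does not go through.
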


It is good to have in mind Corollary \ref{corollary3.2} when examining
this result.

\begin{proof}
We let $a=a_{K}$, $b=b_{K}$ and $q=\max(a,b)$.
From Lemma \ref{lemma4.1} and since $q\le L$, 
the upper density of $Y:=An$ is at least $1-1/(q+1)$
if we choose $n$ such that
\begin{equation}
\label{equation8}
\log_{2}(n)= \left\lceil \frac{\log_{2}(\beta)}{\log_{2}(3/2)} \right\rceil
+ \left\lceil {\log_{2}(L)} \right\rceil.
\end{equation}

 By Lemma \ref{lemma4.2}, it follows that the gaps in $Z:=aY-bY$ are bounded by $q$,
 thus $\underline{d}Z\ge 1/q$. We thus may apply Kneser's theorem
 (Lemma \ref{lemma4.4}). We infer
 that there exists a positive integer $g_{1}$ such that
 $Z(q+1)$ is semi-periodic modulo $g_{1}$ and 
 $$
 1\ge \underline{d}(Z(q+1))\ge(q+1)\underline{d}Z-\frac q{g_{1}}
 \ge\frac{q+1}{q}-\frac q{g_{1}},
 $$
 hence $g_{1}\le q^2$.

By Lemma \ref{lemma4.3} with $m=n(q+1)$  and in view of \eqref{equation7}
(where $c$ is sufficiently large)
and \eqref{equation8} which imply $K-1\ge2\log_{2}(m)+4L+2$,
there exist two positive integers $\alpha,\beta\le L^{K-1}$ such that
$$
\Gamma_{K-1}(A)=\alpha An(q+1) -\beta An(q+1)+ T=\alpha Y(q+1)-\beta Y(q+1)+T,
$$
where $Y=An$ is the set introduced above and $T$ is a set of integers. 
We have seen that  $Z(q+1)=(aY-bY)(q+1)$ 
is semi-periodic modulo $g_{1}\le q^2$, thus by Lemma \ref{lemma5.1},
$\alpha Z(q+1)-\beta Z(q+1)$ is fully periodic modulo 
$g:=\gcd(\alpha,\beta) g_{1}\le \alpha q^2\le L^{K+1}$. Hence
$$
\Gamma_{K}(A)=a\Gamma_{K-1}(A)-b\Gamma_{K-1}(A)=\alpha Z(q+1)-
\beta Z(q+1)+(aT-bT)
$$ 
is  fully periodic modulo $g$.

We infer that  for any $k\ge K$, the set 
 $\Gamma_{k}(A)=\bigcirc_{j=K}^k \Gamma_{a_j,b_j}(\Gamma_{K-1}(A))$
 is fully periodic modulo $g$. This proves (i).

 Let $U$ be a subset of $G=\mathbb{Z}/g\mathbb{Z}$.
 We first obtain an upper bound for  the number of possible iterates
 of $U$ by some linear operations preserving the cardinality.
 By Proposition \ref{proposition5.3} (iv), a necessary condition for having 
 $|\Gamma_{a,b}(U)|=|U|$ for some
 coprime integers $a$ and $b$ smaller than $L$ is that there exists
 a pair of coprime integers $a'$ and $b'$ dividing $g$ and smaller than
 $L$ such that
 $U$ can be written under the form
 $U=V+X+H$ with $V\subset a'G$ and $X\subset b'G$, $|U|=|V||X||H|$
 and $H=P(U)=a'b'G$.
 Its successive iterates by such linear transformations (i.e. preserving the cardinality)
 $\Gamma_{\lambda a',\mu b'}$
  take the form  $a'' V+b'' X+H$ where $1\le a'',b''\le g$ and
   $\gcd(a'',b')=\gcd(a',b'')=1$.
Hence there are at most $g^2$ such possible iterates of $U$.
Since $a'\le L$ and $b'\le L$,
we deduce that there are at most
$(gL)^{2}$ different iterates of $U$ preserving its cardinality.
It follows that for each integer $k$ between $1$ and $g$,
the number of iterates of $U$ with cardinality $k$ is less than or equal to $(gL)^{2}$,
thus there are  at most $g^{3}L^2$ iterates of $U$.

We denote by $U$ the image of
 ${\Gamma_{K}}(A)$ by the canonical homomorphism of $\mathbb{Z}$
 onto $\mathbb{Z}/g\mathbb{Z}$.
 The discussion above  shows that $U$ has
 at most $g^{3}L^2$ different iterates. Remembering that $\Gamma_{K}(A)$ is
 fully periodic modulo $g$, this gives (ii).
  \end{proof}

\section{\bf Concluding remarks}
\renewcommand{\theenumi}{\arabic{enumi}}
\begin{enumerate}
\item In the case when $(a_{i},b_{i})=(a,b)$ for any $i\ge1$ where
$\gcd(a,b)=1$, we deduce
from Theorem \ref{theorem6.1} (using the same notation) that  
 for any set $A$ of integers 
with positive upper density, there exists an integer $p$ dividing $g$ such that
$\Gamma_{k}(A)=\Gamma_{k+p}(A)$ for any sufficiently large integer $k$.

\item The sequence $\{\Gamma_{k}(A);\ {k\ge1}\}$ needs not
to be eventually periodically stable,
that is periodically stable from some point on (that is $\Gamma_{k+p}(A)=\Gamma_{k}(A)$ for
some $p\ge1$ and any large enough $k$).
Consider for exemple $A=1+3\mathbb{Z}$. Let $\alpha\in(0,1)$ be an irrational
numbers and write $\alpha=0.\alpha_{1}\alpha_{2}\dots$ its dyadic expansion.
We know that the sequence $(\alpha_{i})_{i\ge1}$ is not periodically stable. Put
$(a_{i},b_{i})=(2,1)$ if $\alpha_{i}=0$ and $(a_{i},b_{i})=(3,1)$ otherwise.
Then $\Gamma_{k}(A)=A$ if $\alpha_{k}=0$ and  $-A$ otherwise.
This clearly shows that $\{\Gamma_{k}(A);\ {k\ge1}\}$
is  not eventually periodically stable.

\item For any $\beta >0$, we define $f(\beta)$ to be the
maximum value of $t$ such that there exist a set $A$
and a sequence $(a_{j},b_{j})_{j\ge1}$ with
$\overline{d}(A)>1/\beta$ and $A$ is not $t$-stable
with respect to $\{\Gamma_{a_{j},b_{j}};\,j\ge1\}$. Then
Corollary \ref{corollary3.2} and Theorem \ref{theorem6.1} show that
$\log\log\beta +o(1)< \log(f(\beta))\ll \log \beta$ as
$\beta$ tends to $+\infty$
where the implied constants depend on the bound $L$ for the $a_{j}$'s and the $b_{j}$'s.

\item As for difference set, we can define the restricted
linear transformed set
$\Gamma_{a,b}^+(A)=\Gamma_{a,b}(A)\cap\mathbb{Z}^{+}$ obtained by
considering only the nonnegative elements of the standard linear
transformed set $aA-bA$. A further and more natural question with
respect to Stewart-Tijdeman's and Ruzsa's results \cite{r4,r8} could
be to study the stability of sequences defined by iterating
positive restricted linear operations on a set of integers, but it
is seemingly harder.

\end{enumerate}

\bigskip
\def\refname{\bf References}

\end{document}